 \newtheorem{theorem}{Theorem}[section]
 \newtheorem{corollary}[theorem]{Corollary}
 \newtheorem{lemma}[theorem]{Lemma}
 \theoremstyle{definition}
 \newtheorem{definition}[theorem]{Definition}
 \theoremstyle{remark}
 \newtheorem{remark}[theorem]{Remark}
 \newtheorem{example}{Example}
 \numberwithin{equation}{section}
\DeclareMathOperator{\re}{Re}
\DeclareMathOperator{\ord}{ord}
\DeclareMathOperator{\Res}{\mathcal{R}}
\def\Com#1{\mathbb{C}^{#1}}
\def\I{\mathrm{i}}
\def\D{{\mathbb D}}
\def\C{{\mathbb C}}
\begin{document}

\title{On the exponential transform of lemniscates}

\author{Bj\"orn Gustafsson}

\address{Mathematical department\\ KTH}

\email{gbjorn@kth.se}

\author{Vladimir Tkachev}

\address{Mathematical Department, KTH}

\email{tkatchev@kth.se}

\date{\today}

\thanks{Paper supported by Swedish Research Council, the Swedish Royal Academy
of Sciences and the G\"oran Gustafsson Foundation.}

\maketitle


{\it In memory of Julius Borcea}


\begin{abstract}
It is known that the exponential transform of a quadrature domain
is a rational function for which the denominator has a certain separable form.
In the present paper we show that the exponential transform of lemniscate
domains in general are not rational functions, of any form. Several examples are given to 
illustrate the general picture. The main tool used is that of 
polynomial and meromorphic resultants.
\end{abstract}

\section{Introduction}

The \textit{exponential transform} \cite{Carey-Pincus74},
\cite{Putinar98}, \cite{Gustafsson-Putinar98} of a domain $\Omega$
in the complex plane is the function of two complex variables
$z,w\in \Com{}\setminus \overline{\Omega}$ defined by
\begin{equation}\label{defexp}
E_\Omega (z,w)=\exp[\frac{1}{2\pi\I}\int_\Omega \frac{d\zeta}{\zeta
-z}\wedge \frac{d\bar{\zeta}}{\bar{\zeta} -\bar {w}}].
\end{equation}
A bounded domain $\Omega\subset\C$ is called a \textit{quadrature
domain} \cite{Aharonov-Shapiro76}, \cite{Sakai82}, \cite{Shapiro92},
\cite{Gustafsson-Shapiro05} if there exist finitely many points
$z_k\in \Omega$ (the \textit{nodes} of $\Omega$) and coefficients
$c_{kj}\in \C$ ($k=1,\dots, N$, say) such that
\begin{equation}\label{QD1}
\int_\Omega h\, dxdy =\sum_{k=1}^N \sum_{j=1}^{s_k} c_{kj} h^{(j-1)}(z_k)
\end{equation}
for every integrable analytic function $h$ in $\Omega$.
The number $d=\sum_{k=1}^Ns_k$ is called the order of $\Omega$. The
simplest example of a quadrature domain is any disk, for which the
center is the only node ($N=1$).

In 1994 M.~Putinar \cite{Putinar94} (see also \cite{Putinar96})
proved that a bounded domain $\Omega$ is a quadrature domain if and
only if  $E_{\Omega}(z,w)$ for large values of $z$ and $w$ is a
rational function of the form
\begin{equation}\label{e1}
E_\Omega(z,w)=\frac{Q(z, w)}{P(z)\overline{P(w)}},
\end{equation}
where $P(z)$ is an ordinary polynomial and $Q(z,w)$ is a Hermitean
polynomial, i.e., a polynomial in $z$ and $\bar w$ satisfying
$Q(w,z)=\overline{Q(z,w)}$. Moreover, when (\ref{e1}) holds near infinity it
remains valid in all of $(\C\setminus\overline{\Omega})^2$. In addition,
$Q(z,z)=0$ is the defining equation of the boundary
$\partial\Omega$, except for a finite number points, and the zeros
of $P$ are exactly the nodes $z_k$ in (\ref{QD1}). Thus the shape of
a quadrature domain is completely determined by $Q$, or by $E$.

Putinar's result does not exclude that there exist other domains
than quadrature domains for which the exponential transform is a
rational function, then of a more general form than (\ref{e1}) or
only in certain components $(\C\setminus\overline{\Omega})^2$. There indeed do
exist such domains, for example circular domains and domains between
two ellipses. However, all known examples are multiply connected domains which 
are obtained by relatively trivial
modifications of quadrature domains. Therefore the question arises
whether there exist domains definitely beyond the category of
quadrature domains for which the exponential transform is rational in part or 
all of the complement,

Looking from the other side, any domain having rational exponential
transform (in all parts of the complement) necessarily has an
algebraic boundary, because of the boundary behavior of the
exponential transform. The simplest type of domains having an
algebraic boundary, but being definitely outside the scope of
quadrature domains, are lemniscate domains. The relatively modest
main result of the present paper says that for certain types of
lemniscate domains the exponential transform is not a rational
function.

\begin{theorem}\label{th:mm}
Let $\Omega$ be a bounded domain such that there is a $p$-valent
proper rational map $f:\Omega\to \mathbb{D}$ with
$f(\infty)=\infty$. Let $n=\deg f$ be the degree of $f$ as a
rational function. Then, if $n>p$ the exponential transform
$E_\Omega(z,w)$ is not a rational function for $z$ and $w$ in the
unbounded component of $\C\setminus\overline{\Omega}$.
\end{theorem}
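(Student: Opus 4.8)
The plan is to reduce the statement to a rigidity property of the meromorphic resultant.

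First I would record the behaviour of $f$ near $\partial\Omega$. Since $f:\Omega\to\D$ is proper, $|f|=1$ on $\partial\Omega$, so $\overline f(\bar\zeta)=1/f(\zeta)$ there, where $\overline f$ denotes the rational function obtained from $f$ by conjugating its coefficients. Thus $\bar\zeta$ is tied to $\zeta$ along $\partial\Omega$ through the algebraic correspondence $\Gamma=\{(\zeta,t):f(\zeta)\overline f(t)=1\}$, whose first projection to the $\zeta$-line has degree $n$; equivalently $\bar\zeta=S(\zeta)$ for the (multivalued, $n$-branched) Schwarz function $S$ of the lemniscate $\{z:|f(z)|^2=1\}$. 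Note that $\partial\Omega$ is a real curve lying on this lemniscate, and that it is a \emph{proper} subset of it precisely when $f^{-1}(\D)$ has components beyond those making up $\Omega$, i.e. exactly when $p<n$.

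Next I would turn the definition \eqref{defexp} into a closed expression for $E_\Omega$ on the unbounded component $U$ of $\C\setminus\overline\Omega$. A Stokes-type manipulation of \eqref{defexp} gives $\log E_\Omega(z,w)=-\frac1{2\pi\I}\int_{\partial\Omega}\frac{\log(\bar\zeta-\bar w)}{\zeta-z}\,d\zeta$; substituting $\bar\zeta=S(\zeta)$ and tracking the contributions of the poles and zeros of $S$ (namely the preimages under $f$ of $0$, $\infty$, and of $1/\overline{f(z)}$, $1/\overline{f(w)}$) recasts the integral as a meromorphic resultant $\jac_\zeta$ of $f$ against an elementary rational function of $\zeta$ carrying the parameters $z,\bar w$. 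This is where the resultant machinery enters, and it shows that $E_\Omega(z,w)$ is at worst an \emph{algebraic} function of $(z,\bar w)$ on $U$, assembled from only the $p$ branches of $S$ visited by $\partial\Omega$ rather than from all $n$ of them. When $p=n$ the resultant runs over all branches, the symmetrization is complete, and one recovers the rational (quadrature-domain) picture; the content of the theorem is that for $p<n$ this partial symmetrization cannot be rational.

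To finish I argue by contradiction: fix a large $w$ and suppose $E_\Omega(\cdot,w)$ is rational on $U$. Then its analytic continuation across $\partial\Omega$ into $\Omega$ is single-valued; but the continuation formula carries the Schwarz function $S$, and since $\Omega$ accounts for only $p<n$ sheets of $\Gamma$, at least one genuine branch point of $S$ (for instance one lying over a zero of $f$ located inside $\Omega$, or over the reciprocal of a critical value of $f$) sits inside $\Omega$. Because the $p$ selected sheets do not form a monodromy-invariant subsystem of $\Gamma$, this branch point is not killed by the symmetrization and $E_\Omega(\cdot,w)$ genuinely branches there, contradicting rationality. (Equivalently, writing a hypothetical rational $E_\Omega=Q/(P\overline P)$ in lowest terms, the boundary vanishing $E_\Omega(z,z)=0$ on $\partial\Omega$ forces $Q(z,z)$ to vanish on the whole irreducible component of the lemniscate through $\partial\Omega$, hence at points of $U$ where $E_\Omega(z,z)>0$ by positivity of the transform.) The main obstacle is exactly the assertion in italics — that the monodromy of the degree-$n$ correspondence $\Gamma$ does not preserve the $p$-element subset of sheets singled out by $\partial\Omega$, so that no rational ``symmetrization accident'' can occur for $0<p<n$. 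I would treat this using the anti-holomorphic involution $(\zeta,t)\mapsto(\bar t,\bar\zeta)$ of $\Gamma$, whose fixed locus is the full lemniscate, to pin down the selected sheets and their behaviour under monodromy; an alternative, more computational route is a degree count for the meromorphic resultant, comparing the $z$-degree it actually carries (governed by $n$) with the $z$-degree that a rational $E_\Omega$ of the form $Q/(P\overline P)$ built from $p$ sheets could possibly have.
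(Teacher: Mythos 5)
There is a genuine gap, and it sits exactly where you flag it yourself: the assertion that the $p$ sheets of the correspondence $\Gamma$ singled out by $\partial\Omega$ are not monodromy-invariant, so that the branching of the continuation of $E_\Omega(\cdot,w)$ across $\partial\Omega$ cannot cancel, is the entire content of the theorem, and your proposal does not prove it --- it only names two possible strategies for it. Worse, the intermediate claim your contradiction rests on is false: after substituting $\bar\zeta=S(\zeta)$, the boundary integral for $\log E_\Omega$ does \emph{not} collapse to a meromorphic resultant, and $E_\Omega$ is \emph{not} ``at worst algebraic''. When $p<n$ the Schwarz function has branch points and cuts inside $\Omega$, and the cut contributions are genuinely transcendental; the paper's own computation for the Bernoulli lemniscate (Section~\ref{sec:app}) yields the Cauchy transform $\frac{2\arcsin(1/z)}{\pi\sqrt{1-1/z^2}}$ and an exponential transform built from Appell's $F_2$, neither of which is algebraic. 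So the architecture ``algebraic function assembled from $p$ of $n$ sheets, rational only if those sheets close up under monodromy'' does not get off the ground. Your parenthetical positivity argument ($\phi(z,\bar z)=0$ on $\partial\Omega$ forces vanishing on a whole irreducible component of the lemniscate, hence at exterior points where $E_\Omega(z,z)>0$) is an attractive alternative, but it too leaves the decisive step unproved: one must show that the irreducible component carrying $\partial\Omega$ has real points \emph{strictly inside} the unbounded component of $\C\setminus\overline{\Omega}$; the boundaries of the other components of $f^{-1}(\D)$ need not lie there, nor need they lie on the same irreducible component.

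The paper's actual proof is essentially the ``degree count'' you mention only in passing, and it avoids the Schwarz function entirely. It invokes Theorem~\ref{TH1} to get $E_{\D}(z,w)^p=\Res^*_\xi\bigl(f(\xi)-z,\Res^*_{\bar\eta}(\overline{f(\eta)}-\bar w,E_\Omega(\xi,\eta))\bigr)=\bigl(\frac{z\bar w-1}{z\bar w}\bigr)^p$, writes the hypothetical rational $E_\Omega=\phi/\psi$ in regular form (Lemma~\ref{lem:main part}), converts the meromorphic resultants into polynomial ones, and isolates all factors depending on $z$ alone via the principal factorizations $\phi=a(\xi)\overline{a(\eta)}\phi_0$, $\psi=b(\xi)\overline{b(\eta)}\psi_0$; Lemma~\ref{newlemma} guarantees the primitive parts contribute no univariate factors. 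Since $\Res_\xi(f(\xi)-z,\cdot)$ turns every univariate factor into an $n$-th power, matching against the lone factor $z^{-p}$ on the right forces $n\mid p$, contradicting $p<n$. If you want to salvage your route, drop the algebraicity claim and carry out this divisibility argument on the resultant identity instead.
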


Since almost every point of $\mathbb{D}$ has $n=\deg f$ preimages in
total under $f$ and $\Omega$ is assumed to contain only $p<n$ of
these, the assumptions imply that $f^{-1}(\mathbb{D})$ has several
components and that $\Omega$ is only one of them.

A typical situation when Theorem~\ref{th:mm} is applicable is when
$f$ is a rational function of degree $n\geq 2$, which sends infinity
to itself and has only simple zeros. Then for $\epsilon$ small
enough, the set $\{z:|f(z)|<\epsilon\}$ consists exactly of $n$ open
components $\Omega_k$, each containing inside a single zero of $f$.
It follows that $\frac{1}{\epsilon}f|_{\Omega_k}$ is a univalent map
of $\Omega_k$ onto $\mathbb{D}$ and by Theorem~\ref{th:mm} the
exponential transform of $\Omega_k$ is non-rational. This example
can be easily generalized to a wider class of rational functions and
multiplicities.

Besides the above result (Theorem~\ref{th:mm}), the paper contains
methods which may give further insights into the nature of the
exponential transform  and its connections to resultants. We also
give some examples and, in particular, a detailed analysis of the exponential
transform and complex moments for the Bernoulli lemniscate.

As for the organization of the paper, in the first sections we
review some facts about exponential transforms, quadrature domains
and meromorphic resultants which will be needed in the proof of the
main result. The proof of Theorem~\ref{th:mm} is given in
Section~\ref{sec:main}. A few simple examples are given in
Section~\ref{sec:ex} and a more elaborate example, on the Bernoulli
lemniscate, in Section~\ref{sec:app}.

Some related recent results on lemniscates are contained in
\cite{ebenfelt-khavinson-shapiro10} and
\cite{khavinson-mineev-putinar-teodorescu09}.


\section{The exponential transform}

Here we list some basic properties of the exponential  transform. A
full account with detailed proofs may be found in
\cite{Gustafsson-Putinar98}. Even though the definition
(\ref{defexp}) of the exponential transform makes sense for all
$z,w\in\C$ we shall in this paper only study it for
$z,w\in\C\setminus\overline{\Omega}$.
On the diagonal $w=z$ we have $E_\Omega(z,z)>0$ for
$z\in \Com{}\setminus \overline{\Omega}$ and
\begin{equation}\label{exex}
\lim_{z\to z_0} E_\Omega(z,z)=0\;
\end{equation}
for almost all $z_0\in \partial \Omega$. Notice that this property
allows to recover the boundary $\partial\Omega$ from
$E_\Omega(z,w)$.

The exponential transform is Hermitian symmetric:
\begin{equation}\label{et1}
E_\Omega(w,z)=\overline{E_\Omega(z,w)}.
\end{equation}
Expanding the integral in the definition of $E_\Omega(z,w)$ in power
series in $1/\bar w$ gives
\begin{equation}\label{Cauchy0}
E_\Omega(z,w)=1-\frac{1}{\bar w}C_\Omega(z)+ \mathcal{O}(\frac{1}{|w|^2})
\end{equation}
as $|w|\to \infty$, with $z\in\mathbb{C}\setminus \overline{\Omega}$ fixed. Here
$$
C_\Omega(z)= \frac{1}{2\pi \I}\int_{\Omega} \frac{d\zeta\wedge
d\bar\zeta }{\zeta-z}
$$
is the Cauchy transform of $\Omega$.

For explicit evaluations of the exponential transform one can use
its representation in terms of the complex moments of $\Omega$:
$$
M_{pq}(\Omega)=-\frac{1}{2\pi \I}\int\limits_{\Omega} z^p\bar z^q
dz\wedge d\bar z, \quad p,q\geq 0.
$$
Namely, for $z,w$ large enough,
\begin{equation}\label{infact}
E_{\Omega}(z,w)=\exp(-\sum_{p,q=0}^\infty
\frac{M_{pq}(\Omega)}{z^{p+1}\bar w^{q+1}}).
\end{equation}
We shall demonstrate how this can be used in Section~\ref{sec:app}.
For the round disk $\mathbb{D}(a,r)=\{\zeta\in \mathbb{C}:
|\zeta-a|<R\}$ the exponential transform is (see
\cite{Gustafsson-Putinar98})
\begin{equation}\label{et3}
E_{\mathbb{D}(a,r)}(z,w)=
\begin{cases}
1-\frac{R^2}{(z-a)(\bar w-\bar a)}
&\text{for $z,w\in \mathbb{C}\setminus \overline{\mathbb{D}(a,r)}$},\\

-\frac{\bar z-\bar w}{\bar w-\bar a}
&\text{for $z\in \mathbb{D}(a,r)$, $w\in \mathbb{C}\setminus \overline{\mathbb{D}(a,r)}$},\\
\frac{z-w}{z-a}
&\text{for $z\in \mathbb{C}\setminus \overline{\mathbb{D}(a,r)}$, $w\in \mathbb{D}(a,r)$},\\
\frac{|z-w|^2}{R^2-(z-a)(\bar w-\bar a)}&\text{for $z,w\in \mathbb{D}(a,r)$}.\\
\end{cases}
\end{equation}
Here we have listed the values in all $\C^2$ because we shall need
them later to compute the exponential transform for circular
domains.


\section{Quadrature domains and lemniscates}

In this paper we shall mean by a \textit{lemniscate} $\Gamma$ a
plane algebraic curve given by an equation $|f(z)|=1$, where $f(z)$
is a rational function which preserves the point of infinity:
$f(\infty)=\infty$. Hence any lemniscate is given by an equation
\begin{equation}\label{lem1}
|A(\zeta)|=|B(\zeta)|,
\end{equation}
where $A$ and $B$ are relatively prime polynomials, with $B$ assumed
to be monic (that is, with leading coefficient equal to one) and
$n=\deg A>m=\deg B$. The rational function $f$ then is
$f(\zeta)=A(\zeta)/B(\zeta)$ and, as usual, the degree of $f$ is
defined by
$$
\deg f=\max\{\deg A,\deg B\}=n.
$$

Under these  conditions, the algebraic curve $\Gamma$ is  the
boundary of the (bounded) sublevel set $\Omega=\{\zeta:
|f(\zeta)|<1\}$. The latter open set may have several components,
and any such component will be called a \textit{lemniscate domain}.
Notice that $f$ is a proper $n$-to-$1$ holomorphic map of $\Omega $
onto the unit disk:
$$
f: \Omega\to \mathbb{D}=\{z:\; |z|<1\}.
$$
The unit disk itself is the simplest lemniscate domain,  with
$f(\zeta)=\zeta$. When $\deg B=0$ (that is, $B\equiv 1$) we arrive
at the standard definition of a polynomial lemniscate (cf.
\cite[p.~264]{Hille}).

Lemniscates and quadrature domains in the complex plane can be
thought of as dual classes of objects. Indeed,  it is well-known
that any quadrature domain has an algebraic boundary (see
\cite{Aharonov-Shapiro76}, \cite{Gustafsson83}, \cite{Shapiro92},
\cite{Gustafsson-Shapiro05},  \cite{Varchenko-Etingof94}), the
boundary being (modulo finitely many points) the full real section
of an algebraic curve:
\begin{equation}\label{lamma}
\partial \Omega = \{z\in \mathbb{C}: Q(z,z)=0\},
\end{equation}
where $Q(z,w)$ is an irreducible Hermitian polynomial, the same as
in (\ref{e1}). Moreover, the corresponding complex algebraic curve
(essentially $\{(z,w)\in \mathbb{C}^2: Q(z,w)=0\}$) can be naturally
identified with the Schottky double $\widehat{\Omega}$ of $\Omega$
by means of the Schwarz function $S(z)$ of $\partial\Omega$. The
latter satisfies $S(z)=\bar{z}$ on $\partial\Omega$ and is, in the
case of a quadrature domain, meromorphic in all $\Omega$.

It is shown in \cite{Gustafsson-Putinar00}
that a quadrature domain of order $d$ is rationally
isomorphic to the intersection of a smooth rational curve of degree
$d$ in the projective space $\mathbb{P}_d (\C)$ and the complement
of a real affine ball. More precisely, for any quadrature domain
$\Omega$ its defining polynomial $Q(z,w)$ in (\ref{lamma})  admits a
unique representation of the kind:
\begin{equation}\label{perp}
Q(z,z)=|P(z)|^2-\sum_{i=0}^{d-1}|Q_i(z)|^2,
\end{equation}
where $P(z)=\prod_{k=1}^N (z-z_k)^{s_k}$ is a monic polynomial of
degree $d$, the leading coefficients of polynomials $Q_i$ are
positive and $\deg Q_i=i$.

Notice that (\ref{perp}) means that the equation for the boundary of
a quadrature domain is
\begin{equation}\label{perp1}
|P(z)|^2=\sum_{i=0}^{d-1}|Q_i(z)|^2,
\end{equation}
which reminds of the defining equation for a lemniscate
(\ref{lem1}). However, the difference in the number of terms in
(\ref{perp1}) and (\ref{lem1}) makes the generalized lemniscates
(\ref{perp1}) (in terminology of M.~Putinar \cite{PutLem}) much
different from the standard lemniscates defined by (\ref{lem1}). For
instance, the exponential transform of a lemniscate domain is no
more a rational function as we shall see later.

Another point which relates lemniscates and quadrature domains to
each other is the following. Recall that for a \textit{simply
connected} bounded domain, P.~Davis \cite{Davis74} and D.~Aharonov
and H.~S.~Shapiro \cite{Aharonov-Shapiro76} proved that $\Omega$ is
a quadrature domain if and only if $\Omega=f(\mathbb{D})$, where $f$
is a rational uniformizing map from the unit disk $\mathbb{D}$ onto
$\Omega$. This property can be thought as dual to the definition of
a lemniscate given above. Indeed, a simply connected quadrature
domain is an image of the unit disk $\mathbb{D}$ under a (univalent
in $\mathbb{D}$) rational map $f$, while a lemniscate is a preimage
of the unit disk under a (not necessarily univalent) rational map
$g$:
\begin{equation*}
\begin{split}
\mathbb{D}\quad & \stackrel{f}{\longrightarrow} \quad \text{a quadrature domain} \\
\text{a lemniscate domain} \quad &\stackrel{g}{\longrightarrow} \quad  \mathbb{D}.\\
\end{split}
\end{equation*}


\section{Resultants}

The main tool in our proof of the main theorem is the meromorphic and polynomial
resultants. Recall that the (polynomial) resultant  of two  polynomials, $A$ and $B$,
in one complex variable is a polynomial function in the coefficients of $A$, $B$
having the elimination property that it vanishes if and only if $A$ and $B$ have
a common zero \cite{Waerden}, \cite{Gelfand-Kapranov-Zelevinskij}. In terms of the
zeros of  polynomials,
\begin{equation}\label{PQ}
A(z)=A_n\prod_{i=1}^n(z-a_i)=\sum_{i=0}^n A_iz^i ,
\quad B(z)=B_m\prod_{j=1}^m(z-b_j) =\sum_{j=0}^mB_jz^j,
\end{equation}
the resultant (with respect to the variable $z$) is given by the Poisson product formula
\cite{Gelfand-Kapranov-Zelevinskij}
\begin{equation}\label{res1}
\begin{split}
\Res_{z}(A,B)&=A_n^mB_m^n\prod_{i,j} (a_i-b_j) =A_n^m\prod_{i=1}^n B(a_i).
\end{split}
\end{equation}
Alternatively, the resultant is the determinant of the Sylvester matrix:
\begin{equation}\label{sfor}
\Res_{z}(A,B)=\det
\begin{pmatrix}
A_0   & A_1 &A_2 &  \ldots & A_n \\
 & A_0  &A_1&A_2& \ldots & A_n\\
    &  & \ldots &\ldots & \ldots  &  \ldots & \ldots \\
  &  &  &A_0  & A_1 &A_2& \ldots & A_n   \\
   B_0  & B_{1} & \ldots &\ldots & B_m \\
 & B_0 & B_{1} &\ldots & \ldots& B_m\\
    &  & \ldots &\ldots &  \ldots&\ldots & \ldots \\
   &  &  &  B_0 & B_{1} &\ldots &\ldots & B_m  \\
\end{pmatrix}.
\end{equation}
It follows from the above definitions that $\Res_{z}(A,B)$  is
skew-sym\-met\-ric and multiplicative:
\begin{equation}\label{mulmul}
\begin{split}
\Res_{z}(A,B)&=(-1)^{mn}\Res_{z}(B,A), \\
\Res_{z}(A_1A_2,B)&=\Res_{z}(A_1,B)\Res_{z}(A_2,B).
\end{split}
\end{equation}
Conjugating the identity in (\ref{res1}) we get
\begin{equation}\label{conjj}
\overline{\Res_z(A(z),B(z))}=\Res_{\bar z}(\overline{A(z}),\overline{B(z})).
\end{equation}

The authors introduced in \cite{GT071} a notion of the meromorphic
resultant of two meromorphic functions on an arbitrary compact
Riemann surface. Here we shall not need this concept in its full
generality, but for our further goals it will be useful to recall some
facts in the case of the Riemann sphere $\mathbb{P}_1(\C)$.

For two  rational functions $f(z)$ and $g(z)$ the  number
\begin{equation}
\label{gagb1new}
\Res^*(f,g)=\prod_{i} g(a_i)^{n_i},
\end{equation}
when defined, is called the \textit{meromorphic resultant} of $f$
and $g$. Here $\sum_{i}n_ia_i$ is the divisor of $f$. This resultant
is symmetric and multiplicative. An essential difference between the
meromorphic resultant and the polynomial one is that the latter
depends merely on the divisors of $f$ and $g$. If
$f(z)=\frac{A_1(z)}{A_2(z)}$ and $g(z)=\frac{B_1(z)}{B_2(z)}$ are
the polynomial representations then we have the following explicit
formula:
\begin{equation}\label{def1}
\Res^*(f,g)=f(\infty)^{\ord_\infty(g)}g(\infty)^{\ord_\infty(f)}\cdot
\frac{\Res(A_1,B_1)\Res(A_2,B_2)}
{\Res(A_1,B_2)\Res(A_2,B_1)},
\end{equation}
where, generally speaking, $\ord_a(f)$ is the order of $f$ at the
point $a$, that is the integer $m$ such that, in terms of a local
variable $z$ at $a$,
\begin{equation*}
f(z)=c_m(z-a)^m + c_{m+1}(z-a)^{m+1}+\dots \quad {\rm with} \quad c_m\ne 0.
\end{equation*}

M.~Putinar has shown, \cite[Theorem~4.1]{Putinar96}, that if
$f:\Omega_1\to \Omega_2$ is rational and univalent then
$E_{\Omega_2}$ is of separable form (\ref{e1}) provided
$E_{\Omega_1}$ is on such a form. We shall need this fact in the
following more general form.

\begin{theorem}[\cite{GT071}, Theorem~8]\label{TH1}
Let $\Omega_i$, $i=1,2$, be two bounded open sets in the complex
plane and let $f$ be a proper $n$-valent rational function which
maps $\Omega_1$ onto $\Omega_2$. Assume that $E_{\Omega_1}(u,v)$ is
a rational function (more precisely, is the restriction to
$(\C\setminus \overline{\Omega_1})^2$ of a rational function). Then,
for all $z,w\in\Com{}\setminus\overline{\Omega}_2$,
\begin{equation}\label{www}
\begin{split}
E_{\Omega_2}(z,w)^n=\Res^*_\xi (f(\xi)-z,\Res^*_{\bar\eta}
(\overline{f(\eta)}-\bar w,E_{\Omega_1}(\xi,\eta))),
\end{split}
\end{equation}
and this is also (the restriction of) a rational function.
\end{theorem}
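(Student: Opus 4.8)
The strategy is to transport the area integral that defines $E_{\Omega_2}$ back to $\Omega_1$ along $f$, to decompose the pulled-back integrand into partial fractions, and then to recognize the resulting product of values of $E_{\Omega_1}$ as the iterated meromorphic resultant on the right of (\ref{www}).

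Concretely, I would first rewrite the definition (\ref{defexp}) as $\log E_\Omega(z,w)=-\frac1\pi\int_\Omega \frac{dA(\zeta)}{(\zeta-z)(\bar\zeta-\bar w)}$, using $\frac{1}{2\pi\I}d\zeta\wedge d\bar\zeta=-\frac1\pi\,dA$ with $dA$ planar Lebesgue measure; this is a convergent integral for $z,w\notin\overline{\Omega}$. Fix $z,w\in\C\setminus\overline{\Omega}_2$. Since $f|_{\Omega_1}\colon\Omega_1\to\Omega_2$ is a proper holomorphic map, it is an $n$-sheeted branched covering, and $f$ has no poles on the compact set $\overline{\Omega_1}$; hence the change of variables $\lambda=f(\zeta)$ (the finitely many branch points forming a null set) turns the identity $n\log E_{\Omega_2}(z,w)=-\frac n\pi\int_{\Omega_2}\frac{dA(\lambda)}{(\lambda-z)(\bar\lambda-\bar w)}$ into
\[
n\log E_{\Omega_2}(z,w)=-\frac1\pi\int_{\Omega_1}\frac{f'(\zeta)}{f(\zeta)-z}\cdot\frac{\overline{f'(\zeta)}}{\overline{f(\zeta)}-\bar w}\,dA(\zeta).
\]
Now let $\mathrm{div}(f-z)=\sum_k m_k c_k$ be the divisor on the Riemann sphere of the rational function $f-z$ (the $c_k$ with $m_k>0$ are the $f$-preimages of $z$, those with $m_k<0$ the poles of $f$, counted with multiplicity), and similarly $\mathrm{div}(f-w)=\sum_l m_l'c_l'$. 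Logarithmic differentiation gives $\frac{f'(\zeta)}{f(\zeta)-z}=\sum_{c_k\neq\infty}\frac{m_k}{\zeta-c_k}$ and $\frac{\overline{f'(\zeta)}}{\overline{f(\zeta)}-\bar w}=\sum_{c_l'\neq\infty}\frac{m_l'}{\bar\zeta-\bar c_l'}$, and because $z,\infty\notin\overline{\Omega}_2\supseteq f(\overline{\Omega_1})$, \emph{every} finite $c_k$ and $c_l'$ lies in $\C\setminus\overline{\Omega_1}$. Substituting these expansions and expanding the product, every cross term $-\frac1\pi\int_{\Omega_1}\frac{dA(\zeta)}{(\zeta-c_k)(\bar\zeta-\bar c_l')}$ equals $\log E_{\Omega_1}(c_k,c_l')$, so $n\log E_{\Omega_2}(z,w)=\sum_{k,l}m_km_l'\log E_{\Omega_1}(c_k,c_l')$, and exponentiating, $E_{\Omega_2}(z,w)^n=\prod_{k,l}E_{\Omega_1}(c_k,c_l')^{m_km_l'}$.

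To finish, I would identify this product with the right side of (\ref{www}). By (\ref{Cauchy0}) we have $E_{\Omega_1}(u,v)\to1$ as $u\to\infty$ or $v\to\infty$, so the factors with $c_k=\infty$ or $c_l'=\infty$ are trivial and the product may be taken over the full divisors. For fixed $c_k$, the inner product $\prod_l E_{\Omega_1}(c_k,c_l')^{m_l'}$ is, by the definition (\ref{gagb1new}) of the meromorphic resultant applied to the rational function $\bar\eta\mapsto\overline{f(\eta)}-\bar w$ (which has divisor $\sum_l m_l'\bar c_l'$ in the variable $\bar\eta$), precisely $\Res^*_{\bar\eta}\bigl(\overline{f(\eta)}-\bar w,\,E_{\Omega_1}(c_k,\eta)\bigr)$; substituting this expression, regarded as a function of $c_k$, into a second meromorphic resultant against $\mathrm{div}(f-z)$ then reproduces exactly the right side of (\ref{www}). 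Rationality of that expression follows from the hypothesis that $E_{\Omega_1}$ is the restriction of a function rational in $(u,\bar v)$ together with the explicit formula (\ref{def1}): the latter shows the inner resultant to be rational in $(\xi,\bar w)$, and, applied once more, the iterated resultant to be rational in $(z,\bar w)$.

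The step that calls for care is the divisor bookkeeping just above: the poles of $f$ must be retained in $\mathrm{div}(f-z)$ and $\mathrm{div}(f-w)$ — on the resultant side they are responsible for the denominator $\Res(A_1,B_2)\Res(A_2,B_1)$ in (\ref{def1}) — and one must check that these, and the point at infinity, enter consistently on the two sides before the identity can be read off. There are no convergence or well-definedness issues: all the points $c_k,c_l'$ lie off $\overline{\Omega_1}$, and off $\overline{\Omega_1}$ the function $E_{\Omega_1}$ is the exponential of a convergent integral, hence finite and nonzero, so every factor $E_{\Omega_1}(c_k,c_l')$ — and thus every resultant occurring — is an unambiguous nonzero complex number. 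The change of variables and the passage through the logarithm are otherwise routine.
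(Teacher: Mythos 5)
Your proof is correct and is essentially the intended argument: the paper does not reprove Theorem~\ref{TH1} (it quotes it from \cite{GT071}), but the identity you arrive at, $E_{\Omega_2}(z,w)^n=\prod_{k,l}E_{\Omega_1}(c_k,c_l')^{m_k m_l'}$, is exactly the divisor-action formula $E_{\Omega_2}(z,w)^n=E_{\Omega_1}((f-z),(f-w))$ stated immediately after the theorem, and your pullback-plus-partial-fractions derivation, with the bookkeeping that the poles of $f$ carry negative multiplicities and that the contributions at infinity are trivial because $E_{\Omega_1}(\cdot,\infty)=1$, is the standard route to it. No gaps.
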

Another, and perhaps more striking, way to write (\ref{www}) is
$$
E_{\Omega_2}(z,w)^n=E_{\Omega_1}((f-z),(f-w)),
$$
where $(f-z)$, $(f-w)$ denote the divisors of $f(\zeta)-z$,
$f(\zeta)-w$ (as functions of $\zeta$) and the right member refers
to the multiplicative action of $E_{\Omega_1}$ on these divisors.
See \cite{GT071}, in particular Theorem~8, for further details.


\section{Examples and remarks}\label{sec:ex}

Here we give some examples showing that the exponential transform of a multiply connected domain 
may be rational only in some components of the complement, and also 
that it can be rational in all components of the complement but be represented by different 
rational functions in different components. However, we do not know of any domain,
outside the class of quadrature domains, for which the exponential transform is given by 
one and the same rational function everywhere in the complement. 

The final example is supposed to explain, from one point of view,
why lemniscates are fundamentally different from quadrature domains. 

\begin{example}\label{ex1}
For the annulus ${A}(r,R)=\{z\in\C:r<|z|<R\}$ we get, by
using (\ref{et3}),
$$
E_{{A}(r,R)}(z,w)=\frac{E_{\mathbb{D}(0,R)}(z,w)}{E_{\mathbb{D}(0,r)}(z,w)}
=\left(\frac{z\bar w-R^2}{z\bar w-r^2}\right)^\epsilon,
$$
where
$$
\epsilon=
\begin{cases}
1 \quad &{\rm if\,\,}z,w\in \Com{}\setminus
\overline{\mathbb{D}(0,R})\\
-1 \quad &{\rm if\,\,}z,w\in \mathbb{D}(0,r)\\
0 \quad &{\rm if\,\,}z\in \Com{}\setminus
\overline{\mathbb{D}(0,R}),\,w\in \mathbb{D}(0,r)\,\,{\rm
or\,\,vice\,\,versa}.
\end{cases}
$$
Notice that both numerator and denominator are irreducible. In
particular, the annulus is no longer a quadrature domain.

More generally, any domain $\Omega$ bounded by circles has an
exponential transform which is rational in each component of
$(\C\setminus\overline{\Omega})^2$. Indeed, such a domain can be
written
$$
\Omega=\D(a_0,r_0)\setminus\cup_{i=1}^n \overline{\D(a_i, r_i)},
$$
where the $\overline{\D(a_i, r_i)}$ are disjoint subdisks of
$\D(a_0,r_0)$, and since
$$
E_\Omega(z,w)=\frac{E_{\D(a_0,r_0)}(z,w)}{E_{\D(a_1,r_1)}(z,w)\cdots
E_{\D(a_n,r_n)}(z,w)}
$$
the assertion follows immediately from (\ref{et3}).

It should be noted in the present example that $E_\Omega(z,w)$ is
represented by different rational functions in different components
of $(\C\setminus\overline{\Omega})^2$.
\end{example}

\begin{example}
If $D_1$, $D_2$ are quadrature domains with $\overline{D_1}\subset
D_2$, then the exponential transform of
$\Omega=D_2\setminus\overline{D_1}$ is rational in the exterior
component of $(\C\setminus\overline{\Omega})^2$, but generally not
in the other components. The first statement
follows immediately from (\ref{e1}) and the second statement can be
seen from expressions for $E_D (z,w)$ given in
\cite{Gustafsson-Putinar98}. For example, inside a quadrature domain
$D$ the exponential transform is of the form
$$
E_D(z,w)=\frac{|z-w|^2 Q(z,w)}{(z-\overline{S(w)})(S(z)-\bar
w)P(z)\overline{P(w)}} \quad (z,w\in D),
$$
where $S(z)$ is the Schwarz function of $\partial D$. 
When forming
$$
E_\Omega (z,w)=\frac{E_{D_2}(z,w)}{E_{D_1}(z,w)}
$$
one sees that in the right member there appears, for $z,w\in D_1$, besides rational functions
also the factor
$$
\frac{(z-\overline{S_1(w)})(S_1(z)-\bar
w)}{(z-\overline{S_2(w)})(S_2(z)-\bar w)},
$$
which is meromorphic in $D_1\times D_1$ but in general not rational
($S_i$ denotes the Schwarz function of $\partial D_i$).

More explicit evidence will be given in the next example, which
discusses the inversion of a two-point quadrature domain, namely the
ellipse.
\end{example}

\begin{example}\label{ex2}

Consider the ellipse
$$
D=\{z=x+iy\in\C: \frac{x^2}{a^2}+\frac{y^2}{b^2}<1\},
$$
where $0<b<a$. Set $c^2=a^2-b^2$, $c>0$. Writing the equation for
the ellipse in terms of $z$ and $\bar z$ and solving for $\bar z$
gives $\bar z=S_\pm (z)$, where
$$
S_{\pm}(z)= \frac{a^2+b^2}{c^2}z\pm\frac{2ab}{c^2}\sqrt{z^2-c^2}.
$$
Here we make the square root single-valued in $\C\setminus [-c,c]$
by taking it to be positive for large positive values of $z$. Then
$S(z)=S_- (z)$ equals $\bar z$ on $\partial D$, hence this branch is
the Schwarz function for $\partial D$.
According to \cite{Gustafsson-Putinar98},
\cite{Gustafsson-Putinar03} we have
$$
E_D(z,w)=
\begin{cases}-\frac{a+b}{a-b} \cdot\frac{z-\overline{S_-(w)}}{\bar w-S_+(z)}
\quad &{\rm for}\,\, z,w\in
\C\setminus D,\\
-\frac{a+b}{a-b}\cdot \frac{z-w}{\bar w-S_+(z)} \quad &{\rm
for}\,\,z\in \C\setminus D,
\,\,w\in D\setminus [-c,c],\\
\frac{a+b}{a-b}\cdot \frac{\bar z-\bar w}{z-\overline{S_+(w)}} \quad
&{\rm for}\,\,
z\in D\setminus [-c,c],\,\,w\in \C\setminus D,\\
\frac{a+b}{a-b}\cdot \frac{(z-w)(\bar z-\bar w)}{(\bar
w-S_+(z))(\bar w-S_-(z))} \quad &{\rm for} \,\,z,w\in D\setminus
[-c,c].
\end{cases}
$$
Explicitly this becomes
$$
E_D(z,w)=
\begin{cases} -\frac{a+b}{a-b}\cdot\frac{c^2
z-(a^2+b^2)\bar w+2ab\sqrt{{\bar w}^2-c^2}}{c^2\bar
w-(a^2+b^2)z-2ab\sqrt{{z}^2-c^2}} \quad &{\rm for}\,\,z,w\in
\C\setminus D,\\
-\frac{(a+b)^2}{c^2\bar w-(a^2+b^2)z+2ab\sqrt{{z}^2-c^2}}\cdot
(z-w)\quad &{\rm for}\,\,z\in \C\setminus D, \,\,w\in
D,\\
\frac{(a+b)^2}{c^2 z-(a^2+b^2)\bar w+2ab\sqrt{{\bar w}^2-c^2}}\cdot
(\bar z-\bar w) \quad
&{\rm for}\,\,z\in D,\,\,w\in \C\setminus D,\\
\frac{(a+b)^2}{c^2z^2+c^2{\bar w}^2-2(a^2+b^2)z\bar w+4a^2b^2}\cdot
|z-w|^2\quad &{\rm for}\,\,z,w\in D,
\end{cases}
$$
where we have replaced $D\setminus [-c,c]$ by $D$, since
the singularities on the focal segment, which are present in $S(z)$,
do not appear in $E_D(z,w)$. (This is a general fact.)

From the above we see that if we have two ellipses, $D_1$ and $D_2$
with $\overline{D_1}\subset D_2$, then the exponential transform
$E_\Omega=E_{D_2}/E_{D_1}$ of $\Omega=D_2\setminus \overline{D_1}$
is rational in $D_1\times  D_1$ but not in the remaining components of
$(\C\setminus\overline{\Omega})^2$. The square roots 
in the above expression for $E_D(z,w)$ will not disappear.

\end{example}

\begin{example}\label{ex:lemniscate}
The following example is supposed to give a partial explanation of
why lemniscate domains do not have rational exponential transforms,
or at least why they are fundamentally different from quadrature
domains.

Consider the lemniscate domain
$$
\Omega=\{z:\C: |z^n-1|<r^n\},
$$
where $r>1$. This is a simply connected domain bounded by the
lemniscate curve
$$
|z^n-1|=r^n.
$$
The domain $\Omega$ is inside this curve, with the usual
interpretation of the word ``inside''. However, from an algebraic
geometric point of view the lemniscate curve has no inside (or
rather, the inside and the outside are the same).

To explain this, consider the corresponding algebraic curve in
$\C^2$ (or, better, in $\mathbb{P}_2({\mathbb{C}})$) obtained by
setting $w=\bar{z}$ in the above equation:
\begin{equation}\label{algebraiccurve}
z^n w^n -z^n - w^n =r^{2n}-1.
\end{equation}
Solving for $w$ gives the Schwarz function for the lemniscate:
$$
S(z)= \sqrt[n]{\frac{z^n-1+r^{2n}}{z^n-1}}.
$$
This is an algebraic function with $n$ branches, which has branch
points at the solutions of $z^n=1$ and $z^n =1-r^{2n}$. The
branching orders at these points are $n-1$, hence the total
branching order is $2n(n-1)$. The Riemann-Hurwitz formula therefore
gives that the genus of the algebraic curve (\ref{algebraiccurve}) is
$$
{\texttt{g}}=1-n +\frac{1}{2}\cdot 2n(n-1)=(n-1)^2.
$$

Now, what makes quadrature domains special among all domains having
an algebraic boundary is that
the Riemann surface $M$ associated to the algebraic curve defining the boundary 
in a canonical way can be
identified with the Schottky double $\hat{\Omega}$ of the domain,
which generally speaking is a completely different Riemann surface.
In particular this requires that the genus of $M$ agrees with the
genus of $\hat{\Omega}$, which is the number of components of
$\partial\Omega$ minus one.

For the above lemniscate curve the genus of the Schottky double is
zero, while the genus of $M$ is ${\texttt{g}}=(n-1)^2>0$. 
One step further, the algebraic curve defines a symmetric Riemann
surface, the involution being $J:(z,w)\mapsto(\bar{w},\bar{z})$, and
the lemniscate curve is the projection under $(z,w)\mapsto z$ of the
symmetry line $L$ (the set of fixed points of $J$) of this symmetric
Riemann surface. As $L$ has only one component and ${\texttt{g}}>0$,
$L$ cannot separate $M$ into two components: $M\setminus L$ is
connected. This is what we mean by saying that the lemniscate curve
has no inside from an algebraic geometric point of view. Both sides
of the lemniscate are the same, when viewed on $M$. For the above
reason we consider the lemniscate to be seriously beyond the
category of quadrature domains.
\end{example}

\begin{remark}
Unfortunately, Theorem~\ref{th:mm} does not apply to the lemniscate
discussed in above because of the assumption $n>p$ in the theorem.

\end{remark}


\section{Auxiliary results}
We begin with a series of auxiliary facts about general rational exponential transforms.
A polynomial of the kind
\begin{equation*}\label{hermi}
\phi(z,w)=\sum_{i,j=0}^d \phi_{ij}z^i\bar w ^j, \qquad
\phi_{ji}= \overline{\phi_{ij}},
\end{equation*}
is called Hermitian. By (\ref{et1}) any rational exponential transform can be
brought to the following form:
\begin{equation}\label{kind}
E_\Omega(z,w)=\frac{\phi(z,w)}{\psi(z,w)},
\end{equation}
where $\phi$ and $\psi$ are relatively prime Hermitian polynomials.
If the variables in the denominator in (\ref{kind}) separate,
\begin{equation}\label{mmmm}
E_\Omega(z,w)=\frac{\phi(z,w)}{\chi(z)\overline{\chi( w)}},
\end{equation}
we call the exponential transform \textit{separable}.

\begin{definition}\label{defreg}
A Hermitian rational function $E(z,w)=\frac{\phi(z,w)}{\psi(z,w)}$
will be called \textit{regular rational in $U\subset \mathbb{C}$} if
$\phi$ and $\psi$ are relatively prime and
\begin{enumerate}
\item[$(i)$]
$\deg_z \phi=\deg_{\bar w} \phi=\deg_z \psi=\deg_{\bar w} \phi$;

\item[$(ii)$] if $d$ is the common value in $(i)$ and
\begin{equation}\label{herma}
\begin{split}
\phi(z, w)&=\phi_d(z)\bar w^d+\ldots +\phi_1(z)\bar w+\phi_0(z),\\
\psi(z, w)&=\psi_d(z)\bar w^d+\ldots +\psi_1(z)\bar w+\psi_0(z),
\end{split}
\end{equation}
then $\phi_d(z)\equiv\psi_d(z)$;
\item[$(iii)$]
if there exists $z_0\in U$ and two indices $j$ and $k$ such that
$$
\phi_d(z_0)=\ldots=\phi_{k+1}(z_0)=0,\quad  \psi_d(z_0)=\ldots=\psi_{j+1}(z_0)=0,
$$
and $\phi_{k}(z_0)\ne0,$ $\psi_{j}(z_0)\ne0$, then $j=k$ and
\begin{equation}\label{appp}
\phi_{k}(z_0)=\psi_{k}(z_0).
\end{equation}
\end{enumerate}
The common value in $(i)$ is denoted  $\deg E(z,w)$ and called the
degree of $E(z,w)$.
\end{definition}

\begin{remark}
Note that the requirements $(i)-(ii)$ in Definition~\ref{defreg}
identifies a unique monic polynomial $\chi (z)=\phi_d(z)=\psi_d(z)$
and that they mean that $E(z,w)$ is of the form
\begin{equation}\label{remarkseparable}
E_\Omega(z,w)=\frac{\chi(z)\overline{\chi(
w)}+\sum\alpha_{ij}z^i\bar{w}^j}{\chi(z)\overline{\chi( w)}
+\sum\beta_{ij}z^i\bar{w}^j}
\end{equation}
for some Hermitean matrices $(\alpha_{ij})$, $(\beta_{ij})$, $0\leq
i,j\leq d-1$.

\end{remark}

\begin{lemma}
\label{lem:main part} If the exponential transform $E_\Omega(z,w)$
is rational for $z$, $w$ in the unbounded component of $\C\setminus
\overline{\Omega}$ then it is regular rational there.
\end{lemma}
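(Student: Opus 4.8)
The plan is to extract the regularity conditions $(i)$--$(iii)$ from the known analytic behavior of $E_\Omega(z,w)$ near infinity and on the diagonal, starting from a representation (\ref{kind}) with relatively prime Hermitian $\phi,\psi$. First I would pin down the degrees. From the expansion (\ref{Cauchy0}), $E_\Omega(z,w)\to 1$ as $|w|\to\infty$ with $z$ fixed (and symmetrically as $|z|\to\infty$), so $\phi$ and $\psi$ must have the same degree in $\bar w$ and the same leading coefficient in $\bar w$; applying the Hermitian symmetry (\ref{et1}) gives the matching statements in $z$. This yields $(i)$ together with the normalization $\phi_d(z)\equiv\psi_d(z)$, which after dividing out can be taken monic; call it $\chi(z)$. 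Thus $(i)$ and $(ii)$ hold, and $E_\Omega$ has the shape (\ref{remarkseparable}).

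The crux is condition $(iii)$, and this is the step I expect to be the main obstacle. Fix $z_0$ in the unbounded component of $\C\setminus\overline\Omega$. With the notation of (\ref{herma}), suppose $\psi_d(z_0)=\dots=\psi_{j+1}(z_0)=0$, $\psi_j(z_0)\neq 0$ and similarly $\phi_d(z_0)=\dots=\phi_{k+1}(z_0)=0$, $\phi_k(z_0)\neq 0$. Then as $|w|\to\infty$,
\begin{equation*}
E_\Omega(z_0,w)=\frac{\phi_k(z_0)\bar w^{\,k}+O(\bar w^{\,k-1})}{\psi_j(z_0)\bar w^{\,j}+O(\bar w^{\,j-1})}.
\end{equation*}
But $E_\Omega(z_0,w)\to 1$ as $|w|\to\infty$ by (\ref{Cauchy0}) (here $z_0$ is in the complement, so the expansion applies). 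A nonconstant power of $\bar w$ in the numerator or denominator would force this limit to be $0$ or $\infty$, so we must have $j=k$ and $\phi_k(z_0)=\psi_j(z_0)$, which is exactly (\ref{appp}). The subtlety to handle carefully is that $\phi,\psi$ are only assumed relatively prime as polynomials, so one must check that $\phi_k(z_0)$ and $\psi_j(z_0)$ are not simultaneously forced to vanish and that the leading-order cancellation is genuine; this follows because if $\psi_j(z_0)=0$ as well one simply moves to the next index, and the process terminates since $\psi(z_0,\cdot)\not\equiv 0$ (otherwise $z\mapsto(z-z_0)$ would divide $\psi$ but, by the computation above forcing the numerator to vanish to the same order, also $\phi$, contradicting coprimality).

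Finally I would assemble the pieces: conditions $(i)$ and $(ii)$ were obtained from the asymptotics $E_\Omega\to 1$ at infinity in each variable plus Hermitian symmetry, and $(iii)$ from the refined asymptotics at each $z_0$ in the unbounded component. Since these are precisely the defining properties in Definition~\ref{defreg} with $U$ the unbounded component of $\C\setminus\overline\Omega$, the function $E_\Omega(z,w)$ is regular rational there, and the common degree $d$ is well defined. I do not anticipate difficulty with $(i)$ and $(ii)$; the only place requiring care is the bookkeeping of vanishing orders in $(iii)$ and the use of coprimality to rule out spurious common factors of the form $(z-z_0)$.
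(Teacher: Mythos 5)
Your proposal is correct and follows essentially the same route as the paper: conditions $(i)$--$(ii)$ from the Hermitian symmetry (\ref{et1}) together with the limit (\ref{Cauchy0}), and condition $(iii)$ from the asymptotics of $E_\Omega(z_0,w)\sim\frac{\phi_k(z_0)}{\psi_j(z_0)}\bar w^{\,k-j}$ as $|w|\to\infty$, which forces $j=k$ and $\phi_k(z_0)=\psi_k(z_0)$. The extra bookkeeping you add about coprimality and vanishing orders is harmless but not needed beyond what the paper's one-line argument already gives.
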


\begin{proof}
The first two properties are straightforward corollaries of the
Hermitian property of $E_\Omega(z,w)$ and the limit relation
(\ref{Cauchy0}).

In order to check $(iii)$ we notice that
$$
E_\Omega(z_0, w)=\frac{\phi_{k}(z_0)\bar w^k+\ldots +\phi_1(z_0)\bar w+\phi_0(z_0)}{
\psi_j(z_0)\bar w^j+\ldots +\psi_1(z_0)\bar w+\psi_0(z_0)}\sim \frac{\phi_{k}(z_0)}{\psi_j(z_0)}\bar w^{k-j}, \quad \text{as $w\to \infty$}.
$$
By virtue of  (\ref{Cauchy0}) we have $j=k$ and $\phi_{k}(z_0)=\psi_{k}(z_0)$.
\end{proof}

Given an arbitrary Hermitian polynomial
$$
\phi(z,w)=\phi_d(z)\bar w^d+\phi_{n-1}(z)\bar
w^{d-1}+\ldots +\phi_1(z)\bar w+\phi_0(z), \qquad
\phi_{d}\not\equiv 0,
$$
we denote by
$$
a(z)=\mathrm{gcd}(\phi_d(z),\phi_{d-1}(z),\ldots,\phi_0(z))
$$
the \textit{monic} (in $z$) polynomial which is the greatest common
divisor of the coefficients of $\phi(z,w)$. We  call $a(z)$ the
\textit{principal divisor} of $\phi(z,w)$. A polynomial $\phi(z,w)$
will be called  \textit{primitive} if $a\equiv 1$. The following
properties are immediate corollaries of the definition.

\begin{lemma}\label{lem:primitive}
(i) A Hermitian polynomial $\phi(z,w)$ is primitive if and only if there is no
$z_0\in\Com{}$ such that $\phi(z_0,w)\equiv 0$ identically in $w$.

(ii) If $a$ is the principal divisor of $\phi(z,w)$ then
\begin{equation}\label{phiphi10}
\phi(z,w)=a(z)\overline{a(w}){\phi}_0(z,w)
\end{equation}
where ${\phi}_0(z,w)$ is a primitive Hermitian polynomial.
Conversely, if $\phi(z,w)$ admits a factorization (\ref{phiphi10})
with ${\phi}_0(z,w)$ primitive then $a(z)$ is (up to normalization)
the principal divisor of $\phi(z,w)$.

\end{lemma}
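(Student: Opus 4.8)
The plan is to regard $\phi(z,w)$ as a genuine polynomial in two independent variables $z$ and $\zeta:=\bar w$, i.e.\ as an element of the UFD $\C[z,\zeta]=\C[z][\zeta]$, and to identify the principal divisor $a(z)$ with the \emph{content} of $\phi$ with respect to $\zeta$ over the coefficient ring $\C[z]$. With this reading part (i) is essentially a one-line observation: by definition $\phi$ is primitive precisely when the monic polynomial $a(z)=\mathrm{gcd}(\phi_0,\dots,\phi_d)$ equals $1$, and a monic one-variable polynomial equals $1$ exactly when it has no zero in $\C$; but $a(z_0)=0$ says precisely that $\phi_0(z_0)=\dots=\phi_d(z_0)=0$, which is the same as $\phi(z_0,w)=\sum_j\phi_j(z_0)\bar w^j\equiv 0$ in $w$. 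So the non-existence of such a $z_0$ is equivalent to primitivity.

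For part (ii) I would first record how the Hermitian symmetry produces the ``conjugate'' content. Writing $\phi(z,w)=\sum_i\psi_i(\bar w)\,z^i$, the relation $\phi_{ji}=\overline{\phi_{ij}}$ gives at once $\psi_i(\bar w)=\overline{\phi_i(w)}$, so the gcd of the $z$-coefficients of $\phi$ is $\overline{a(w)}$ up to a unit, the coefficientwise conjugate of $a$. Hence $a(z)$ divides every $\phi_j(z)$ and so divides $\phi$ in $\C[z,\zeta]$, and symmetrically $\overline{a(w)}$ divides $\phi$. Since $a(z)\in\C[z]$ and $\overline{a(w)}\in\C[\zeta]$, any common factor would lie in $\C[z]\cap\C[\zeta]=\C$, so the two are coprime in $\C[z,\zeta]$ and their product divides $\phi$. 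This produces a factorization $\phi(z,w)=a(z)\,\overline{a(w)}\,\phi_0(z,w)$ with $\phi_0\in\C[z,\zeta]$, and dividing the Hermitian identity $\phi(z,w)=\overline{\phi(w,z)}$ by the Hermitian factor $a(z)\overline{a(w)}$ shows that $\phi_0$ is again Hermitian.

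It remains to check that $\phi_0$ is primitive and to establish the converse, and for both I would invoke the multiplicativity of the content (Gauss's lemma) over the UFD $\C[z]$. Applying $\mathrm{cont}_\zeta$ to $\phi=a(z)\,\overline{a(w)}\,\phi_0$, using $\mathrm{cont}_\zeta(a(z))=a(z)$ and $\mathrm{cont}_\zeta(\overline{a(w)})=1$ (its $\zeta$-coefficients are scalars), gives $a(z)=\mathrm{cont}_\zeta(\phi)=a(z)\,\mathrm{cont}_\zeta(\phi_0)$, hence $\mathrm{cont}_\zeta(\phi_0)=1$, i.e.\ $\phi_0$ is primitive. For the converse, if $\phi=a(z)\overline{a(w)}\phi_0$ with $\phi_0$ primitive, the same computation yields $\mathrm{cont}_\zeta(\phi)=a(z)$ up to a unit of $\C[z]$, so $a(z)$ is the principal divisor of $\phi$ up to normalization.

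The statement is in effect the Hermitian-equivariant refinement of the classical decomposition ``polynomial $=$ content $\times$ primitive part.'' The only point needing care is the bookkeeping of the conjugation when one identifies the $z$-content of $\phi$ with $\overline{a(w)}$, together with invoking Gauss's lemma in the correct variable; I do not expect a substantive obstacle here.
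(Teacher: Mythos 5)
Your argument is correct, and it is essentially the standard content/primitive-part decomposition that the paper itself treats as self-evident: the authors state Lemma~\ref{lem:primitive} with no proof, remarking only that the properties are ``immediate corollaries of the definition.'' Your identification of $a(z)$ with $\mathrm{cont}_{\bar w}(\phi)$ over $\C[z]$, the use of Hermitian symmetry to get the conjugate content $\overline{a(w)}$ in the other variable, and the appeal to Gauss's lemma for primitivity of $\phi_0$ and for the converse are all sound (and slightly more machinery than strictly needed, but harmless).
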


We shall refer to (\ref{phiphi10}) as to the \textit{principal factorization} of
$\phi(z,w)$.

Let $f(\zeta)=A(\zeta)/B(\zeta)$ be a rational function with $A$ and
$B$ relatively prime polynomials such that $\deg A=n>m=\deg B$ and
define a new polynomial by
\begin{equation}\label{letus}
f_z(\zeta)=A(\zeta)-zB(\zeta), \quad \deg f_z=n.
\end{equation}
It is not hard to check that for any Hermitian polynomial $\phi(\xi,\eta)$, the expression
$$
\Res_\xi(f_z(\xi),\Res_{\bar\eta}(\overline{f_w(\eta)},\phi(\xi,\eta)))
$$
is also a Hermitian polynomial in $z,w$, hence it allows a principal
factorization, which we write as
\begin{equation}\label{thth1}
\Res_\xi(f_z(\xi),\Res_{\bar\eta}(\overline{f_w(\eta)},\phi(\xi,\eta)))=T(z)\overline{T(w)}
\theta(z,w).
\end{equation}

\begin{lemma}\label{newlemma}
In the above notation, let
\begin{equation}\label{const1}
\phi(\xi,\eta)=a(\xi)\overline{a(\eta)}\phi_0(\xi, \eta)
\end{equation}
be the principal factorization of $\phi$. Then for some $c\in
\Com{}$, $c\ne 0$:
\begin{equation}\label{thth2}
\begin{split}
\theta(z, w)&=\frac{1}{|c|^2}\Res_\xi(f_z(\xi),
\Res_{\bar\eta}(\overline{f_w(\eta)},\phi_0(\xi,\eta))),\\
T(z)&=c\Res_\xi(f_z(\xi),a(\xi))^n.
\end{split}
\end{equation}
In particular, $\Res_\xi(f_z(\xi),\Res_{\bar\eta}
(\overline{f_w(\eta)},\phi_0(\xi,\eta)))$ is primitive.
\end{lemma}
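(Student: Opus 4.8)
The plan is to reduce the lemma to the multiplicativity of the resultant together with the Poisson product formula (\ref{res1}), isolating the one genuinely geometric point — that the double resultant of $f_z$ and $\overline{f_w}$ against a \emph{primitive} Hermitian polynomial is again primitive. The first step is to peel off the principal divisor: substituting $\phi(\xi,\eta)=a(\xi)\overline{a(\eta)}\phi_0(\xi,\eta)$ into the inner resultant and using that $a(\xi)$ is free of $\eta$ while $\overline{a(\eta)}$ is free of $w$, repeated application of the multiplicativity relations (\ref{mulmul}) and the conjugation rule (\ref{conjj}) should give
\[
\Res_\xi(f_z(\xi),\Res_{\bar\eta}(\overline{f_w(\eta)},\phi(\xi,\eta)))
= T_0(z)^{n}\,\overline{T_0(w)}^{\,n}\cdot
\Res_\xi(f_z(\xi),\Res_{\bar\eta}(\overline{f_w(\eta)},\phi_0(\xi,\eta))),
\]
with $T_0(z)=\Res_\xi(f_z(\xi),a(\xi))$. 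The exponent $n$ is forced because $f_z$ and $\overline{f_w}$ have degree exactly $n$ in $\xi$ and $\bar\eta$; this is precisely where the hypothesis $n=\deg A>m=\deg B$ enters, since it makes $\Res_\xi(f_z,c)=c^{\,n}$ for every constant $c$ and keeps the formal degrees in the Sylvester determinants under control. This step should be routine once the formal degrees are fixed so that (\ref{res1}) applies verbatim.

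The heart of the proof is to show that $G(z,w):=\Res_\xi(f_z(\xi),\Res_{\bar\eta}(\overline{f_w(\eta)},\phi_0(\xi,\eta)))$ is primitive. Applying (\ref{res1}) twice — first in $\bar\eta$ against $\overline{f_w}$, then in $\xi$ against $f_z$ — writes $G(z,w)$, up to a nonzero constant, as $\prod_{i,j}\phi_0(\xi_i(z),\eta_j(w))$, the product running over the $f$-preimages $\xi_1(z),\dots,\xi_n(z)$ of $z$ and $\eta_1(w),\dots,\eta_n(w)$ of $w$. By Lemma~\ref{lem:primitive}(i), $G$ fails to be primitive only if $G(z_0,\cdot)\equiv0$ for some $z_0$. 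I would fix such a $z_0$ and restrict $w$ to a small disk $U$ avoiding the finitely many critical values of $f$, so that the branches $\eta_j(w)$ are holomorphic on $U$; since $\mathcal{O}(U)$ is an integral domain, $G(z_0,w)\equiv0$ on $U$ forces a single factor $\phi_0(\xi_{i_0}(z_0),\eta_{j_0}(w))$ to vanish identically on $U$. As $w\mapsto\eta_{j_0}(w)$ is a nonconstant holomorphic map, its image is open, and a polynomial in $\bar\eta$ that vanishes on an open set of $\eta$ vanishes identically; hence $\phi_0(\xi_{i_0}(z_0),\eta)\equiv0$ in $\eta$, contradicting primitivity of $\phi_0$ by Lemma~\ref{lem:primitive}(i). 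This also establishes the last assertion of the lemma.

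Finally, the identity of the first step together with the primitivity of $G$ exhibits $T_0(z)^n\overline{T_0(w)}^nG(z,w)$ as a principal factorization of the double resultant. Comparing with the factorization (\ref{thth1}) that defines $T$ and $\theta$, and invoking uniqueness of the principal factorization (Lemma~\ref{lem:primitive}(ii)) — plus a one-line comparison of leading coefficients to pin down the constant — one gets $T(z)=c\,\Res_\xi(f_z(\xi),a(\xi))^n$ and $\theta(z,w)=|c|^{-2}G(z,w)$ for some $c\neq0$, which is exactly (\ref{thth2}).

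I expect the primitivity argument for $G$ to be the only real obstacle: making the chain ``a product of holomorphic functions on $U$ vanishes $\Rightarrow$ one factor vanishes $\Rightarrow$ $\phi_0$ degenerates'' fully rigorous while correctly handling the branch points of $f$ and the multiplicities of the preimage map. The peeling step and the concluding identification of $T$ and $\theta$ are purely formal manipulations of resultants and of the principal factorization.
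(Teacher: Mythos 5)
Your proof is correct, and while your peeling step and the final identification via uniqueness of the principal factorization coincide with the paper's own computation (its identity (\ref{spis}) is exactly your first display, with $h=T_0$), you handle the crucial point --- primitivity of $G(z,w)=\Res_\xi(f_z,\Res_{\bar\eta}(\overline{f_w},\phi_0))$ --- by a genuinely different argument. The paper first writes $T=h^nt$, supposes $t(\alpha)=0$, and uses the elimination property of the resultant twice: the vanishing of $G(\alpha,\cdot)$ forces a common root of $f_\alpha$ and the inner resultant for every $w$, a ``standard continuity argument'' makes that root independent of $w$, and one more iteration produces $\eta_0$ with $A(\eta_0)=B(\eta_0)=0$, contradicting coprimality of $A$ and $B$. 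You instead expand $G$ by the Poisson formula (\ref{res1}) into the product $\prod_{i,j}\phi_0(\xi_i(z),\eta_j(w))$ over branches of $f^{-1}$, use that holomorphic functions on a disk form an integral domain to isolate a single identically vanishing factor, and conclude via the open mapping theorem that $\phi_0(\xi_{i_0}(z_0),\cdot)\equiv 0$, contradicting Lemma~\ref{lem:primitive}(i). Your route stops one level earlier (at the primitivity of $\phi_0$ rather than at the coprimality of $A$ and $B$) and replaces the paper's somewhat informal ``common root chosen independently of $w$'' by a cleaner identity-theorem argument; the paper's route stays entirely within polynomial algebra and needs no discussion of branch points. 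One small point to make explicit when writing yours up: since $\phi_0$ is a polynomial in $\xi$ and $\bar\eta$, each factor $\phi_0(\xi_{i}(z_0),\eta_{j}(w))$ is the conjugate of a holomorphic function of $w$ on $U$, which changes nothing in the integral-domain step but should be said.
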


\begin{proof}
Substituting (\ref{const1}) into (\ref{thth1}) and applying the
multiplicativity of the polynomial resultant we find
\begin{equation}\label{spis}
\begin{split}
T(z)\overline{T(w)}\theta(z,w)&=\Res_\xi(f_z(\xi),a(\xi)^n\cdot\Res_{\bar\eta}(
\overline{f_w(\eta)},\overline{a(\eta)})\cdot
\Res_{\bar\eta}(\overline{f_w(\eta)},\phi_0(\xi,\eta)))\\
&=h(z)^n\overline{h(w})^n\Res_\xi(f_z(\xi),
\Res_{\bar\eta}(\overline{f_w(\eta}),\phi_0(\xi,\eta))).
\end{split}
\end{equation}
Here $h(z)$ stands for the resultant $\Res_\xi(f_z(\xi),a(\xi))$
and by virtue of (\ref{conjj}) we have
$\overline{h(w})=\Res_{\bar\eta}(\overline{f_w(\eta)},\overline{a(\eta)})$.

By our assumption $\theta(z,w)$ is primitive. Hence we find from
(\ref{spis}) that
\begin{equation}\label{TT}
T(z)=h(z)^n t(z)
\end{equation}
for some polynomial $t(z)$. Therefore (\ref{spis}) yields
\begin{equation}\label{BY}
\Res_\xi(f_z(\xi),\Res_{\bar\eta}(\overline{f_w(\eta}),\phi_0(\xi,\eta)))
=t(z) \overline{t(w)}\theta(z,w),
\end{equation}
and, because $\theta(z,w)$ is primitive, (\ref{BY}) provides (up to a constant factor)
the principal  factorization for
the left hand side.

We claim now that $t(\xi)$ is equal to a constant. Indeed, to reach
a contradiction we assume that $\deg t(z)\geq 1$ and consider an
arbitrary root $\alpha$ of the polynomial $t(z)$. By virtue of
(\ref{BY}),
$$
\Res_\xi(f_\alpha(\xi),\Res_{\bar\eta}(\overline{f_w(\eta}),\phi_0(\xi,
\eta))) =0 \quad  (w\in \mathbb{C}).
$$
This means that polynomials $f_\alpha(\xi)=A(\xi)-\alpha B(\xi)$ and
$\Res_{\bar\eta}(\overline{f_w(\eta}),\phi_0(\xi, \eta))$ have a
common root for any $w$. Since $f_\alpha(\xi)$ does not depend on
$w$, a standard continuity argument yields that the common root
can be taken independently on $w$. Denote it by $\xi_0$. It follows
then that
\begin{equation}\label{vanish}
\Res_{\bar\eta}(\overline{f_w(\eta}),\phi_0(\xi_0, \eta))=0 \quad
(w\in\Com{}).
\end{equation}

Since $\phi_0(\xi,\eta)$ is primitive, by Lemma~\ref{lem:primitive},
we have $\phi(\xi_0,\eta)\not\equiv 0$. Then by virtue of
(\ref{vanish}),  $\phi(\xi_0,\eta)$ and $\overline{f_w(\eta})$ as
polynomials in $\overline{\eta}$ have a common root, say
$\overline{\eta_0}$, which again can be chosen independently of $w$.
Then
$$
0= {f_w(\eta_0)}=A(\eta_0)-w B(\eta_0) \quad  (w\in\Com{}).
$$
Hence $A(\eta_0)=B(\eta_0)=0$ which contradicts the assumption that
$A$ and $B$ are relatively prime. This  contradiction proves that
$t(z)$ is constant. Applying this to (\ref{TT}) we arrive at the
required formulas in (\ref{thth2}) and the lemma is proved.
\end{proof}

\begin{corollary}
Let $f_z(\zeta)=A(\zeta)-zB(\zeta)$ with $A$ and $B$ to be
relatively prime polynomials, $\deg A>\deg B$. Let $\phi(\xi,\eta)$
be a Hermitian polynomial such that
\begin{equation}\label{thth}
\Res_\xi(f_z(\xi),\Res_{\bar\eta}(\overline{f_w(\eta)},\phi(\xi,\eta)))
=T(z)\overline{T(w)}
\end{equation}
for some polynomial $T(z)$. Then $\phi(\xi,\eta)$ is separable,
i.e., there is a polynomial $a(\xi)$ such that
\begin{equation}\label{const}
\phi(\xi,\eta)=a(\xi)\overline{a(\eta)}.
\end{equation}
\end{corollary}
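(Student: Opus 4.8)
The plan is to reduce everything to the primitive part of $\phi$ and to show that this part is forced to be a constant. First I would take the principal factorization $\phi(\xi,\eta)=a(\xi)\overline{a(\eta)}\phi_0(\xi,\eta)$ with $\phi_0$ primitive, which exists by Lemma~\ref{lem:primitive}. Plugging this into the left member of (\ref{thth}) and using multiplicativity of the polynomial resultant exactly as in the computation (\ref{spis}) of the proof of Lemma~\ref{newlemma}, one gets
\[
\Res_\xi(f_z(\xi),\Res_{\bar\eta}(\overline{f_w(\eta)},\phi(\xi,\eta)))=h(z)^n\overline{h(w)}^n\,R_0(z,w),
\]
where $h(z)=\Res_\xi(f_z(\xi),a(\xi))$ and, by the last assertion of Lemma~\ref{newlemma}, the factor $R_0(z,w)=\Res_\xi(f_z(\xi),\Res_{\bar\eta}(\overline{f_w(\eta)},\phi_0(\xi,\eta)))$ is primitive. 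Comparing with the hypothesis (\ref{thth}) gives $h(z)^n\overline{h(w)}^n R_0(z,w)=T(z)\overline{T(w)}$; since $T(z)\overline{T(w)}$ has constant primitive part, uniqueness of the principal factorization (Lemma~\ref{lem:primitive}(ii)) forces $R_0(z,w)$ to be a constant, and this constant $c_0$ is nonzero because a primitive polynomial is not identically $0$.

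Next I would exploit that $c_0\neq0$ means $R_0$ never vanishes. Suppose, for contradiction, that $\phi_0(\xi_0,\eta_0)=0$ for some $\xi_0,\eta_0\in\C$ with $B(\xi_0)\neq0$ and $B(\eta_0)\neq0$, and set $z_0=f(\xi_0)$, $w_0=f(\eta_0)$, which are finite. Then $f_{z_0}(\xi_0)=0$ and $\overline{f_{w_0}(\eta_0)}=0$, so $\overline{\eta_0}$ is a common root of $\overline{f_{w_0}(\eta)}$ and $\phi_0(\xi_0,\eta)$ as polynomials in $\bar\eta$, whence $\Res_{\bar\eta}(\overline{f_{w_0}(\eta)},\phi_0(\xi_0,\eta))=0$; and then $\xi_0$ is a common root in $\xi$ of $f_{z_0}(\xi)$ and $\xi\mapsto\Res_{\bar\eta}(\overline{f_{w_0}(\eta)},\phi_0(\xi,\eta))$, so $R_0(z_0,w_0)=0$, contradicting $R_0\equiv c_0\neq0$. (Here one uses only that a resultant, computed with any fixed formal degrees, vanishes as soon as the two polynomials share a root; this applies because the leading coefficients of $f_{z_0}$ in $\xi$ and of $\overline{f_{w_0}(\eta)}$ in $\bar\eta$ are the nonzero constants $A_n$ and $\overline{A_n}$, so no degeneration occurs in the outermost resultants.) Hence $\phi_0$ vanishes nowhere outside the zero set of $B$ in either variable.

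Finally I would conclude as follows. Working in the affine coordinates $(\xi,\bar\eta)$ in which $\phi_0$ is a genuine polynomial, the previous step shows that its reduced zero locus is contained in the finite union of hyperplanes $\{\xi=b\}$ and $\{\bar\eta=\bar b\}$ with $B(b)=0$; therefore $\phi_0(\xi,\eta)=c\prod_b(\xi-b)^{k_b}\prod_b(\bar\eta-\bar b)^{l_b}$. Primitivity of $\phi_0$ eliminates the factors in $\xi$ (otherwise $\prod_b(\xi-b)^{k_b}$ would divide every coefficient of $\phi_0$ as a polynomial in $\bar w$), and then the Hermitian symmetry $\phi_0(\xi,\eta)=\overline{\phi_0(\eta,\xi)}$ eliminates the factors in $\bar\eta$ as well; so $\phi_0\equiv c$, a real constant (and $c>0$ in the situation of interest, where $\phi$ is a numerator or denominator of the positive transform $E_\Omega$). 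Thus $\phi(\xi,\eta)=c\,a(\xi)\overline{a(\eta)}$, and absorbing $\sqrt{c}$ into $a$ gives (\ref{const}).

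I expect the main obstacle to be the bookkeeping in the second paragraph: one must be sure that specializing $z=f(\xi_0)$, $w=f(\eta_0)$ is legitimate and that a possible drop in the $\xi$-degree of the inner resultant at $w=w_0$ does not spoil the vanishing. This is exactly the point where the hypotheses $\deg A>\deg B$ and $\gcd(A,B)=1$ enter, and it is handled by the standard fact quoted above; the remaining step, passing from ``$\phi_0$ nonvanishing off the zeros of $B$'' to ``$\phi_0$ is a product of the linear forms $\xi-b$ and $\bar\eta-\bar b$'', together with the primitivity and Hermitian reductions, is routine once one works in the coordinates $(\xi,\bar\eta)$.
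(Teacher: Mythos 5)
Your proof is correct. Its first half coincides with the paper's: both pass to the principal factorization $\phi(\xi,\eta)=a(\xi)\overline{a(\eta)}\phi_0(\xi,\eta)$ and use Lemma~\ref{newlemma} to conclude that $R_0(z,w)=\Res_\xi(f_z(\xi),\Res_{\bar\eta}(\overline{f_w(\eta)},\phi_0(\xi,\eta)))$ is a nonzero constant. The second half is genuinely different. The paper finishes by a degree count based on the Poisson product formula (\ref{res1}): a constant resultant forces the $\xi$-degree of the inner resultant to vanish, and, repeating the argument one level down, $\deg_\xi\phi_0=0$, whence $\phi_0$ is constant by Hermitian symmetry. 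You instead locate zeros of $R_0$ geometrically: a zero $(\xi_0,\eta_0)$ of $\phi_0$ with $B(\xi_0)B(\eta_0)\ne0$ yields, at $z_0=f(\xi_0)$, $w_0=f(\eta_0)$, a common root in each resultant and hence $R_0(z_0,w_0)=0$; so the zero set of $\phi_0$ sits on the finitely many lines $\xi=b$, $\bar\eta=\bar b$ over the roots of $B$, and primitivity together with Hermitian symmetry removes the corresponding linear factors. Your route is longer but more careful at exactly the point where the paper is terse: the degree in $z$ of $\Res_\xi(f_z,P)$ equals $\deg_\xi P$ only when no root of $P$ is a root of $B$, so the degree count can degenerate precisely on the locus that your argument isolates and handles; and your remark that the outer resultants do not degenerate (because the leading coefficients $A_n$, $\overline{A_n}$ are nonzero constants) is the right justification for specializing formal resultants at $(z_0,w_0)$. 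One cosmetic point, shared by both arguments: what is actually proved is $\phi=c\,a(\xi)\overline{a(\eta)}$ with $c$ a nonzero real constant, and $c$ can be absorbed into $a$ only when $c>0$; as you observe, this holds in the intended application where $\phi$ arises from the exponential transform.
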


\begin{proof}
It suffices to show that the function $\phi_0(z,w)$ in
(\ref{const1}) is equal to a constant. By the first identity in
(\ref{thth2}) we have
$\Res_\xi(f_z(\xi),\Res_{\bar\eta}(\overline{f_w(\eta)},\phi_0(\xi,\eta)))\equiv
|c|^2 $ for some complex number $c\ne 0$. By the product formula
(\ref{res1}) this resultant, as a polynomial in $z$, has degree
$p\deg A$, where $p$ is the degree of
$\Res_{\bar\eta}(\overline{f_w(\eta)},\phi_0(\xi,\eta))$ as a
polynomial in $\xi$. Hence
$\deg_\xi\Res_{\bar\eta}(\overline{f_w(\eta)},\phi_0(\xi,\eta))=0$.
Since $\deg_{\bar \eta}\overline{f_w(\eta)}=\deg A\ne 0$, the same
argument shows that $\deg_\xi \phi_0(\xi,\eta)=0$.  But
$\phi_0(\xi,\eta)$ is Hermitian, hence it is a constant.
\end{proof}


\section{Proof of Theorem~\ref{th:mm}}
\label{sec:main}

We argue by contradiction and assume that, for some rational
function $f(\zeta)$ of degree $n=\deg f> p$, there is a domain
$\Omega$ such that $f$ is $p$-valent and proper in $\Omega$, and in
addition that the exponential transform of $\Omega$ is rational for $z$, $w$
large. Then, by
virtue of (\ref{www}),
\begin{equation}\label{tttt}
\Res^*_\xi (f(\xi)-z,\Res^*_{\bar
\eta}(\overline{f(\eta)}-\bar w,E_{\Omega}(\xi,\eta)))
=E_{\mathbb{D}}(z,w)^p=\left(\frac{z\bar w-1}{z\bar w}\right)^p,
\end{equation}
Since $E_{\Omega}(\xi,\eta)$ is rational we can write it as a fraction
$\frac{\phi(\xi,\eta)}{\psi(\xi,\eta)}$, where $\phi(\xi,\eta)$ and $\psi(\xi,\eta)$
are polynomials. By (\ref{Cauchy0}) we have  $E_{\Omega}(\xi,\infty)$=1 and, thus,
$\mathrm{ord}_{\eta=\infty} E_{\Omega}(\xi,\eta)=0$ for any
$\xi\in \mathbb{C}\setminus \overline{\Omega}$ (here $E_{\Omega}(\xi,\infty)$
is regarded as a rational function of $\eta$).  Hence we infer from (\ref{def1}) that
\begin{equation}\label{hh1}
h(\xi,w):=\Res^*_{\bar\eta}(\overline{f(\eta)}-\bar w,E_{\Omega}(\xi,\eta))
=\frac{\Res_{\bar\eta}(\overline{f(\eta)}-\bar w,\phi(\xi,\eta))}
{\Res_{\bar\eta}(\overline{f(\eta)}-\bar w,\psi(\xi,\eta))}.
\end{equation}

It easily follows from the Poisson product formula (\ref{res1})
that $h(\xi,w)$ is a rational function in  $\xi$ and $\bar w$. By
Lemma~\ref{lem:main part}, $E_\Omega$ is regular in the unbounded
component of $\Com{}\setminus \overline{\Omega}$ in the sense of
Definition~\ref{defreg}, hence
$$
\deg_\xi \phi(\xi,\eta)=\deg_\xi \psi(\xi,\eta)=:d.
$$

On the other hand, since $\deg_{\bar\eta} (\overline{f(\eta)}-\bar
w)=n$ independently of $w$ (recall that $\deg A>\deg B$), the
degrees of the numerator and the denominator in the right hand side
of (\ref{hh1}), as polynomials in $\xi$, are equal to $nd$. In
particular, $\ord_{\xi=\infty} h(\xi,w)=0$ and a not difficult
analysis of the leading coefficients of $\xi$ in the numerator and
denominator of $h(\xi,w)$ together with (\ref{appp}) shows that
$h(\infty,w)=1$ (alternatively, one can notice that the meromorphic
resultant
$\Res^*_{\bar\eta}(\overline{f(\eta)}-\bar
w,E_{\Omega}(\xi,\eta))$ in
the definition of $h$ is obviously a continuous function of $\xi\in
\mathbb{C}\setminus \overline{\Omega}$ and use that
$E_{\Omega}(\infty,\eta)=1$).

Summarizing these facts, we write the meromorphic resultant in (\ref{tttt})
by virtue of (\ref{def1}) and (\ref{res1}) in terms of polynomial resultants as
\begin{equation}\label{varr}
\begin{split}
\frac{\Res_\xi(f(\xi)-z,\Res_{\bar\eta}(\overline{f(\eta)}-\bar
w,\phi(\xi,\eta)))}
{\Res_\xi(f(\xi)-z,\Res_{\bar\eta}(\overline{f(\eta)}-\bar
w,\psi(\xi,\eta)))}
=z^{-p}\bar{w}^{-p}(z\bar w-1)^p.
\end{split}
\end{equation}
In the right hand side of (\ref{varr}) there is only one factor
which contains merely the variable $z$, namely $z^{-p}$. Now we look
for all factors of the left hand side of (\ref{varr}) which are
univariate polynomials in $z$. To this end, we pass to  the principal
factorizations
$$
\phi(\xi,\eta)=a(\xi)\overline{a(\eta})\phi_0(\xi,\eta), \qquad
\psi(\xi,\eta)=b(\xi)\overline{b(\eta})\psi_0(\xi,\eta),
$$
hence by multiplicativity of the resultant,
$$
\Res_{\bar\eta}(\overline{f(\eta)}-\bar w,\phi(\xi,\eta))=
a(\xi)^n\cdot \Res_{\bar\eta}(\overline{f(\eta)}-\bar w,\overline{a(\eta}))\cdot
\Res_{\bar\eta}(\overline{f(\eta)}-\bar w,\phi_0(\xi,\eta)),
$$
and the resultant in the numerator in (\ref{varr}) is found to be
the following product:
\begin{equation}\label{exactt}
\Res^n_\xi(f(\xi)-z,a(\xi))\cdot
\Res^n_{\bar\eta}(\overline{f(\eta)}-\bar w,\overline{a(\eta}))\cdot
\Res_\xi(f(\xi)-z,\Res_{\bar\eta}(\overline{f(\eta)}-\bar w,\phi_0(\xi,\eta)))
\end{equation}
The second factor in (\ref{exactt}) does not contain $z$ at all, and
the third factor is primitive by Lemma~\ref{newlemma}, hence it has no factors which depend on a single variable. It follows that the only factor
in (\ref{exactt}) which is a univariate polynomial in $z$ is
$\Res^n_\xi(f_z(\xi),a(\xi))$.

Repeating the same argument with the denominator in (\ref{varr}) and collecting all
factors which contain $z$ only, we arrive  at
\begin{equation}\label{varr1}
\begin{split}
\left(\frac{\Res_\xi(f_z(\xi),a(\xi))}
{\Res_\xi(f_z(\xi), b(\xi))}\right)^n=Cz^{-p}
\end{split}
\end{equation}
for some constant $C$. But the latter yields immediately that $n$
divides $p$, which contradicts our assumption $p<n.$ The theorem
follows.


\section{Appendix: the exponential transform of Bernoulli's lemniscate}
\label{sec:app}

Finally we treat the most classical lemniscate domain (or rather
open set), namely the set bounded by the lemniscate of Bernoulli
$$
\Omega=\{z\in \mathbb{C}:\;|z^2-1|<1\}.
$$
Obviously, the odd harmonic moments of $\Omega$ are zero and a
straightforward calculation for the even moments yields
$$
M_{2k}(\Omega)=\frac{2^{2k+1}(k!)^2}{\pi (2k+1)!}.
$$
Hence we obtain, for the corresponding Cauchy transform,
$$
C_{\Omega}(z)=\sum_{m\geq 0}\frac{M_{m}(\Omega)}{z^{m+1}}=
\frac{1}{\pi}\sum_{m\geq 0}\frac{(k!)^2}{ (2k+1)!}\left(\frac{2}{z}\right)^{2k+1}
=\frac{2\arcsin \frac{1}{z}}{\pi\sqrt{1-\frac{1}{z^2}}},
$$
which shows that $C_{\Omega}(z)$, and therefore also
$E_{\Omega}(z,w)$, is transcendental.

We find below a closed formula for the exponential transform of
$\Omega$. For any $p,q\geq 0$ with $p+q$ even, the $(p,q)$:th
harmonic moment is found by integration in polar coordinates:
\begin{equation*}
\begin{split}
M_{p,q}(\Omega)&=\frac{1}{\pi}\int_{\Omega}z^p\overline{z}^q \; dxdy =
\frac{2}{\pi}\int_{\Omega_+}z^p\overline{z}^q \; dxdy \\
&=\frac{2}{\pi}\int_{-\frac{\pi}{4}}^{\frac{\pi}{4}}e^{\I
(p-q)\theta}d\theta \int_{0}^{\sqrt{2\cos 2\theta}}
\rho^{p+q+1}d\rho\\
&=\frac{2^{\frac{p+q}{2}+2}}{\pi(p+q+2)}\int_{-\frac{\pi}{4}}^{\frac{\pi}{4}}
(\cos 2\theta)^{\frac{p+q}{2}+1}e^{\I (p-q)\theta}d\theta \\
&=\frac{2^{\frac{p+q}{2}+2}}{\pi(p+q+2)}\int_{0}^{\frac{\pi}{2}}
(\cos t)^{\frac{p+q}{2}+1}e^{\I \frac{p-q}{2}t}\,dt \\
&=\frac{2^{\frac{p+q}{2}+2}}{\pi(p+q+2)}
\int_{0}^{\frac{\pi}{2}}(\cos t)^{\frac{p+q}{2}+1}\cos ( \frac{p-q}{2}t)\,dt \\
\end{split}
\end{equation*}
where $\Omega_+=\Omega\cap \{z: \re z>0\}$ is the right petal of $\Omega$.
Expressing the last integral in terms of the Gamma function we obtain
\begin{equation}\label{we1}
\begin{split}
M_{p,q}(\Omega)
&=\frac{1}{2}\cdot\frac{\Gamma(\frac{p+q}{2}+1)}{\Gamma(\frac{p+1}{2}+1)
\Gamma(\frac{q+1}{2}+1)}.\\
\end{split}
\end{equation}

Let $p$ be an odd number, $p=2k+1$, $k\geq 0$. Then by the evenness of $p+q$, $q$
is odd too and we write $q=2m+1$. Hence
\begin{equation*}\label{we2}
\begin{split}
M_{2k+1,2m+1}(\Omega)
&=\frac{\Gamma(k+m+2)}{2\Gamma(k+2)\Gamma(m+2)}
=\frac{1}{2(k+m+2)}\binom{k+m+2}{k+1},\\
\end{split}
\end{equation*}
and we obtain for a partial sum
\begin{equation*}\label{we3}
\begin{split}
\sum_{k+m=n}\frac{M_{2k+1,2m+1}(\Omega)}{z^{2k+2}\bar{w}^{2m+2}}
&=\frac{1}{2(n+2)}\sum_{k=0}^n\binom{n+2}{k+1}(z^{-2})^{k+1}(\bar{w}^{-2})^{n+1-k}\\
&=\frac{1}{2(n+2)}\biggl(\biggl(\frac{1}{z^2}
+\frac{1}{\bar{w}^2}\biggr)^{n+2}-\frac{1}{z^{2(n+2)}}-\frac{1}{\bar{w}^{2(n+2)}}\biggr).
\end{split}
\end{equation*}
Therefore
\begin{equation*}\label{we4}
\begin{split}
S_{\mathrm{odd}}&\equiv \sum_{k,m\geq 0}
\frac{M_{2k+1,2m+1}(\Omega)}{z^{2k+2}\bar w^{2m+2}}
=\frac{1}{2}\sum_{n=0}^\infty \left(\frac{(z^{-2}
+\bar{w}^{-2})^{n+2}}{(n+2)}-\frac{z^{-2n-4}}{n+2}-\frac{\bar{w}^{-2n-4}}{n+2}\right)\\
&=\frac{1}{2}[\ln(1-z^{-2})+\ln(1-\bar{w}^{-2})-\ln(1-z^{-2}-\bar{w}^{-2})]\\
&=-\frac{1}{2}\ln\biggl(1-\frac{1}{(z^2-1)(\bar{w}^2-1)}\biggr),
\end{split}
\end{equation*}
and it follows from (\ref{infact}) that
\begin{equation}\label{foll}
E_\Omega(z,w)=\sqrt{1-\frac{1}{(z^2-1)(\bar{w}^2-1)}}\cdot \exp(-S_{\mathrm{even}}),
\end{equation}
where 
$$
S_{\mathrm{even}}\equiv \sum_{k,m\geq 0}
\frac{M_{2k,2m}(\Omega)}{z^{2k+1}\bar w^{2m+1}}.
$$

In order to find the even partial sum,  we find from (\ref{we1})
$$
M_{2k,2m}(\Omega)
=\frac{\Gamma(k+m+1)}{2\Gamma(k+\frac{3}{2})\Gamma(m+\frac{3}{2})}=
\frac{2}{\pi}\cdot \frac{(1)_{k+m}}{(\frac{3}{2})_{k}(\frac{3}{2})_m}
$$
where $(a)_{x}=\frac{\Gamma(a+x)}{\Gamma(a)}$ denotes the Pochhammer symbol. Thus
\begin{equation}\label{Seven}
\begin{split}
S_{\mathrm{even}}&=\frac{2}{\pi} \sum_{k,m\geq 0}
\frac{(1)_{k+m}}{(\frac{3}{2})_{k}(\frac{3}{2})_m}z^{-2k-1}\bar w^{-2m-1}\\
&=
\frac{2}{\pi z\bar w}\cdot F_2(1;1,1;\frac{3}{2},\frac{3}{2};z^{-2},\bar w^{-2}),
\end{split}
\end{equation}
where
$$
F_2(a;b,b';c,c';x,y)= \sum_{k,m= 0}^\infty
\frac{(a)_{k+m}(b)_k(b')_m}{(c)_k(c')_m}\,\frac{x^k}{k!}\,\frac{y^m}{m!}
$$
is the so-called Appell's function of the second kind
\cite[p.14]{Appell} (see also \cite[p.~53]{StrivaManocha}). It is
well-known \cite[p. 214, Eq. (8.2.3)]{Slater} that $F_2$, like the
hypergeometric function of Gauss, admits an integral
representations:
\begin{equation}\label{Slat}
F_2(a;b,b';c,c';x,y)= C
\int_{0}^1\int_0^1 \frac{(1-u)^{c-b-1}(1-v)^{c'-b'-1}}{u^{1-b}v^{1-b'}(1-xu-vy)^{a}}
\,du\, dv,
\end{equation}
where $C=\frac{\Gamma(c)\Gamma(c')}{\Gamma(b)\Gamma(b')\Gamma(c-b)\Gamma(c'-b')}$.
We find for our choice of  parameters
\begin{equation*}\label{inte}
F_2(1;1,1;\frac{3}{2},\frac{3}{2};z^{-2},\bar w^{-2})=\frac{1}{4}
\int_{0}^1\int_0^1 \frac{du\, dv}{\sqrt{1-u}\sqrt{1-v}(1-xz^{-2}-v\bar w^{-2})},
\end{equation*}
After an initial change of variables $u=1-\xi^2$ and $v=1-\eta^2$ we
find, after several additional changes of variables
\begin{equation*}\label{inte1}
\begin{split}
F_2(1;1,1;\frac{3}{2},\frac{3}{2};z^{-2},\bar w^{-2})&=
\int_{0}^1\int_0^1 \frac{d\xi\, d\eta}{(1-z^{-2}-\bar w^{-2})
+(\xi^2z^{-2}+\eta^2\bar w^{-2})}\\
&=\frac{1}{1-z^{-2}-\bar w^{-2}}\int_{0}^1\int_0^1
\frac{d\xi\, d\eta}{1+\xi^2s^{2}+\eta^2t^{2}}\\
&=\frac{1}{st(1-z^{-2}-\bar w^{-2})}\int_{0}^s\int_0^t
\frac{d\xi\, d\eta}{1+\xi^2+\eta^2}\\
&=z\bar w\int_{0}^s\int_0^t \frac{d\xi\, d\eta}{1+\xi^2+\eta^2},\\
\end{split}
\end{equation*}
where $s={z^{-1}}(1-z^{-2}-\bar w^{-2})^{-\frac{1}{2}}$ and
$t={\bar w^{-1}}(1-z^{-2}-\bar w^{-2})^{-\frac{1}{2}}$. By virtue (\ref{Seven})
this implies
$$
S_{\mathrm{even}}=\frac{2}{\pi}\int_{0}^s\int_0^t \frac{d\xi\, d\eta}{1+\xi^2+\eta^2}.
$$

\begin{remark}
Interesting to note that the right hand side in (\ref{Seven}) is the
well-known Hubbell Rectangular Source Integral, and it expresses the
response of an omni-directional radiation detector situated at
height $h=1$ directly over a corner of a plane isotropic rectangular
(plaque) source of length $1/z$, width $1/w$ and a constant uniform
strength \cite{Hub1}, \cite{Hub2} (see also \cite{Kal}).
\end{remark}

In general, for the rose-lemniscate $\Omega_n=\{z\in \mathbb{C}:|z^n-1|<1\}$,
a similar argument shows that
\begin{equation}\label{rose1}
M_{kn+\lambda,mn+\lambda}(\Omega_n)=\frac{1}{n}\;
\frac{\Gamma(k+m+\frac{2(1+\lambda)}{n})}{\Gamma(k+1+\frac{1+\lambda}{n})\,
\Gamma(m+1+\frac{1+\lambda}{n})},
\end{equation}
when $\lambda=0,1,\ldots, p-1$, and $M_{ij}(\Omega_n)=0$ for $i-j\not \equiv 0\mod n$.
After a series of simple transformations this yields
$$
\sum_{i,j}\frac{M_{ij}(\Omega_n)}{z^{i+1}\bar w ^{j+1}}
=\frac{1}{n}\,\sum_{\lambda=0}^{p-1}
\,(z\bar w)^{-1-\lambda} \,S_\lambda(z^{-n},w^{-n}),
$$
where
$$
S_\lambda(x,y)=\frac{\Gamma(\frac{2(1+\lambda)}{n})}{\Gamma(\frac{1+n+\lambda}{n})^2}\,
F_2\biggl(\frac{2\lambda+2}{n};1,1;\, \frac{\lambda+n+1}{n},
\frac{\lambda+n+1}{n};\,x,y\biggr)
$$
is an Appell function of the second kind. Applying first a fractional linear transformation formula (8.3.10) in \cite[p.~219]{Slater}
\begin{equation}\label{Slat2}
F_2(a;b,b';\,c,c';\, x,y)=\frac{F(a;c-b,c'-b';\,c,c';\,
\frac{x}{x+y-1},\frac{y}{x+y-1})}{(1-x-y)^{a}}
\end{equation}
and then (\ref{Slat}), we get
\begin{equation}
\begin{split}
S_\lambda(x,y)&=\frac{\Gamma(2\lambda_n)}{\Gamma(1+\lambda_n+1)^2}\;
\frac{F_2(2\lambda_n;\lambda_n,\lambda_n;\, \lambda_n+1,\lambda_n+1;
\,\frac{x}{x+y-1},\frac{y}{x+y-1})}{(1-x-y)^{2\lambda_n}}\\
&=\frac{\Gamma(2\lambda_n)}{\Gamma(\lambda_n)^2}
\int_{0}^1\int_0^1 \frac{(1-x'-y')^{2\lambda_n}\,du\, dv,}
{u^{1-\lambda_n}v^{1-\lambda_n}(1-x'u-vy')^{2\lambda_n}}\\
&=\frac{\Gamma(2\lambda_n)}{\Gamma(1+\lambda_n)^2}
\int_{0}^1\int_0^1 \frac{(1-x'-y')^{2\lambda_n}\,d\xi\, d\eta,}
{(1-x'\xi^{1/\lambda_n}-y'\eta^{1/\lambda_n})^{2\lambda_n}}\\
\end{split}
\end{equation}
where $\lambda_n=\frac{1+\lambda}{n}$, $x'=\frac{x}{x+y-1}$ and $y'=\frac{y}{x+y-1}$.



\begin{thebibliography}{99}

\bibitem{Aharonov-Shapiro76}
Aharonov, D. and Shapiro, H. S.: Domains in which analytic functions
satisfy quadrature identities, \textit{J.~ Analyse Math.}
\textbf{30} (1976), 39--73.

\bibitem{Appell}
P. Appell,   M. J. Kamp\'{e} de F\'{e}riet, \textit{Fonctions
hyperg\'{e}om\'{e}triques et hypersph\'{e}riques: Polyn\^{o}mes
d'Hermite} , Gauthier-Villars, 1926.


\bibitem{Carey-Pincus74}
Carey,  R. W. and Pincus, J. D.: An exponential formula for
determining functions, \textit{Indiana Univ. Math.J.} {\textbf{23}} (1974),
1031--1042.


\bibitem{Davis74}
Davis, P. J.: \textit{The Schwarz Function and its Applications},
Carus Math. Mongraphs No.~17, Math. Assoc. Amer., 1974.

\bibitem{ebenfelt-khavinson-shapiro10}
{Ebenfelt, P., Khavinson, D. and Shapiro H.~S.}: {Two-dimensional
shapes and lemniscates}, preprint, {2010}.

\bibitem{Gelfand-Kapranov-Zelevinskij}
Gelfand, I. M., Kapranov, M. M., and Zelevinsky, A. V.:
\textit{Discriminants, resultants, and multidimensional
determinants.} Birk\-h\"auser Boston, Inc., Boston, MA, 1994.



\bibitem{Gustafsson83}
Gustafsson, B.: Quadrature identities and the Schottky double,
\textit{Acta Appl. Math.}  \textbf{{1}} (1983), 209--240.

\bibitem{Gustafsson-Putinar98}
Gustafsson, B. and Putinar, M.: An exponential transform and
regularity of free boundaries in two dimensions, \textit{Ann. Scuola
Norm. Sup. Pisa Cl. Sci.}~(4),  \textbf{{26}} (1998), 507--543.

\bibitem{Gustafsson-Putinar00}
Gustafsson, B. and Putinar, M.: Linear analysis of quadrature
domains, II, \textit{Israel J.~ Math.}  {\textbf{119}} (2000), 187--216.

\bibitem{Gustafsson-Putinar03}
Gustafsson, B. and Putinar, M.: The exponential transform: a
renormalized Riesz potential at critical exponent, \textit{Indiana
Univ. Math. J.} {\textbf{52}} (2003), 527--568.



\bibitem{Gustafsson-Shapiro05}
Gustafsson, B. and Shapiro,  H. S.: What is a quadrature domain? pp.
1--25 in \cite{qd}.

\bibitem{GT071}
Gustafsson, B. and Tkachev, V. G.: The resultant on compact Riemann
surfaces, \textit{Comm. Math. Phys.} \textbf{286} (2009), 313--358.

\bibitem{Hille}
Hille, E.: \textit{Analytic function theory. Vol.~II.} Ginn \& Co.
New York, 1962.

\bibitem{Hub1}
Hubbell, J. H.: A power series buildup factor formulation. Application to rectangular and offaxis
disk source problems, \textit{J. Res. NBS}, \textbf{67C} (1963), 291--306.

\bibitem{Hub2}
Hubbell, J. H.: Dose fields from plane sources using point-source data, \textit{Nucleonics} \textbf{21}~(1963), 144--148.

\bibitem{Kal}
Kalla, S. L., Al-Shammery A. H. and Khajah H. G.: Development of the
Hubbell Rectangular Source Integral, \textit{Acta Appl. Math.},
\textbf{74}~(2002), 34--55

\bibitem{khavinson-mineev-putinar-teodorescu09}
{Khavinson, D., Mineev-Weinstein, M., Putinar, M and Teodorescu,
R.}: {Lemniscates do not survive Laplacian growth},
{arXiv:0912.2129}, {math.CV}, 2009.

\bibitem{Putinar94}
Putinar, M.: On a class of finitely determined planar domains,
\textit{Math. Res. Lett.} \textbf{1} (1994), 389--398.

\bibitem{Putinar96}
Putinar, M.: Extremal solutions of the two-dimensional $L$-problem
of moments, \textit{J.~Funct.An.}  \textbf{136} (1996), 331--364.

\bibitem{Putinar98}
Putinar, M.: Extremal solutions of the two-dimensional $L$-problem
of moments, II, \textit{J.~Approx.~Th.}  \textbf{{92}} (1998), 38--58.

\bibitem{PutLem}
Putinar, M.: Notes on generalized lemniscates. Operator theory, systems
theory and scattering theory: multidimensional generalizations, 243--266,
\textit{Oper. Theory Adv. Appl.}, 157, Birkhauser, Basel, 2005

\bibitem{qd}
\textit{Quadrature Domains and Applications}, a Harold~S.~Shapiro Anniversary Volume.
(eds. Ebenfelt,~ P., Gustafsson,~ B., Khavinson,~ D., and Putinar,~ M.), Birkh\"auser, 2005.



\bibitem{Sakai82}
Sakai, M.:
\textit{Quadrature Domains}, Lect. Notes Math.  Vol.~{\textbf{934}}, 
Springer-Verlag, Berlin-Heidelberg, 1982.



\bibitem{Shapiro92}
Shapiro, H. S.: \textit{The Schwarz function and its generalization
to higher dimensions}, Uni. of Arkansas Lect. Notes Math. Vol.~
\textbf{9}, Wiley, New York, 1992.

\bibitem{Slater}
L. J. Slater, \textit{Generalized Hypergeometric Functions},
Cambridge Univ. Press, Cambridge, 1966.

\bibitem{StrivaManocha}
H. M. Srivastava, H. L. Manocha, \textit{A treatise on generating
functions}. Halsted Press (Ellis Horwood Limited, Chichester), John
Wiley and Sons, New York, Chichester, Brisbane, and Toronto, 1984.



\bibitem{Varchenko-Etingof94}
Varchenko, A. N. and Etingof, P. I.: \textit{Why the Boundary of a
Round Drop Becomes a Curve of Order Four}, American Mathematical
Society AMS University Lecture Series, Vol.~\textbf{3}, Providence,
Rhode Island 1992.

\bibitem{Waerden}
Waerden, van der: \textit{Algebra I}, Springer-Verlag, 1966.

\end{thebibliography}


\end{document}